\theoremstyle{plain}
\theoremstyle{definition}\newtheorem{theorem}{Theorem}[section]
\theoremstyle{plain}\newtheorem{lemma}[theorem]{Lemma}
\theoremstyle{plain}\newtheorem{coro}[theorem]{Corollary}
\theoremstyle{plain}
\theoremstyle{remark}\newtheorem{remark}{Remark}[section]
\theoremstyle{definition}
\theoremstyle{plain}
\newcommand{\norm}[1]{\left\|#1\right\|}
\newcommand{\Div}{\mathrm{div}\,}
\newcommand{\B}{\Big}
\newcommand{\be}{\begin{equation}}
\newcommand{\ee}{\end{equation}}
 \newcommand{\ba}{\begin{aligned}}
 \newcommand{\ea}{\end{aligned}}
  \newcommand{\f}{\frac}
  \newcommand{\ben}{\begin{enumerate}}
   \newcommand{\een}{\end{enumerate}}
\newcommand{\ti}{\nabla}
\newcommand{\Rmnum}[1]{\expandafter\@slowromancap\romannumeral #1@}
\numberwithin{equation}{section}
\begin{document}
\title{ Gagliardo-Nirenberg inequality in anisotropic Lebesgue spaces and energy equality in the Navier-Stokes equations }
  \author{Yanqing Wang\footnote{College of Mathematics and Information Science, Zhengzhou University of Light Industry, Zhengzhou, Henan  450002,  P. R. China Email: wangyanqing20056@gmail.com}, ~~~~~Xue Mei\footnote{  College of Mathematics and Information Science, Zhengzhou University of Light Industry, Zhengzhou, Henan  450002,  P. R. China Email: meixue\_mx@outlook.com}\;~ ~and ~  Wei Wei\footnote{Corresponding author. School of Mathematics and Center for Nonlinear Studies, Northwest University, Xi'an, Shaanxi 710127,  P. R. China  Email: ww5998198@126.com }\;
   }

\date{}
\maketitle
 \begin{abstract}
 In this paper, it is shown that an analogue in mixed norm spaces of
  Gagliardo-Nirenberg inequality is valid. As an application, some new criteria
  for energy conservation of Leray-Hopf weak solutions to the 3D Navier-Stokes equations are established in these spaces, which extend the corresponding known results.
  \end{abstract}
\noindent {\bf MSC(2020):}\quad 42B35, 42B25, 35A23, 35L65, 76D05 \\\noindent
{\bf Keywords:} Gagliardo-Nirenberg inequality; anisotropic Lebesgue spaces;  Navier-Stokes equations; energy equality \\
\section{Introduction}
\label{intro}
\setcounter{section}{1}\setcounter{equation}{0}
\subsection{Sobolev-Gagliardo-Nirenberg inequality}
Starting from the appearance of the Sobolev embedding theorem, it has become an essential
tool in modern theory of partial differential equations.
The classical Sobolev inequality reads
\be\label{Sobolevineq}
\|f\|_{L^{\f{rn}{n-r}}(\mathbb{R}^{n})}\leq C \|\nabla f\|_{L^{r}(\mathbb{R}^{n})},~~1\leq r<n.
\ee
A celebrated generalization of Sobolev inequality \eqref{Sobolevineq} is the following Gagliardo-Nirenberg inequality obtained independently by \cite{[Gagliardo]} and \cite{[Nirenberg]}
 \be\label{GNI}
\|D^{j}u\|_{L^{p}(\mathbb{R}^{n})}\leq C\|D^{m}u\|_{L^{r}(\mathbb{R}^{n})}^{\theta}\|u\|_{L^{q}(\mathbb{R}^{n})}^{1-\theta},
\ee
where $j,m$ are any integers satisfying  $0\leq j<m$, $~1\leq q,r\leq \infty,~$ and
\[
\frac{1}{p}-\frac{j}{n}= \theta(\frac{1}{r}-\frac{m}{n})+(1-\theta)\frac{1}{q}
\]
for all $\theta$ in the interval $\f{j}{m}\leq \theta\leq1,$ unless $1<r<\infty$ and $m-j-\f{n}{r}$ is a nonnegative integer.

Almost at the same time, Benedek-Panzone \cite{[BP]} introduced mixed  (anisotropic) Lebesgue spaces $L^{\overrightarrow{p}}(\mathbb{R}^{n})$ with $\overrightarrow{p}=(p_{1},p_{2},\cdots,p_{n})$
and extended the Sobolev inequality \eqref{Sobolevineq} to
\be\label{bpsobo}
\|f\|_{L^{\overrightarrow{p}}(\mathbb{R}^{n})}\leq C \|(-\Delta)^{\f{s}{2}} f\|_{L^{\overrightarrow{r}}(\mathbb{R}^{n})},~~ \text{with}~~ \sum_{i=1}^{n}\left(\frac{1}{r_{i}}-\frac{1}{p_{i}}\right)=s,~1<\overrightarrow{r}<\overrightarrow{p}<\infty.
\ee
It is clear that the anisotropic Lebesgue spaces $L^{\overrightarrow{p}}(\mathbb{R}^{n})$ are a generalization of usual Lebesgue spaces $L^{p}(\mathbb{R}^{n})$.
Later on, in the framework  of anisotropic  Lebesgue spaces and  anisotropic derivatives,  Besov-Il'in-Nikolski\v{i} \cite{[BIN]} showed an anisotropic  Gagliardo-Nirenberg inequality below
\be\ba\label{BIN}
&\|D^{\alpha}u\|_{L^{\overrightarrow{q}}(\mathbb{R}^{n})}\leq C\| u\|_{L^{\overrightarrow{p(0)}}(\mathbb{R}^{n})}^{\mu_{0}}\prod_{j=1 }^{n}
\|D^{l_{j}}_{j}u\|_{L^{\overrightarrow{p(j)}}(\mathbb{R}^{n})}^{\mu_{j}},~~\text{with}~~\sum_{j=0}^{n}\mu_{j}=1,~0\leq \mu_{j}\leq1,~  \\ &\alpha_{i}-\f1{q_{i}}=(l_{i}-\f{1}{p(i)_{i}})\mu_{i}-\sum_{0\leq j\leq n,\,j\neq i}\f{\mu_{j}}{p(j)_{i}}, ~1\leq i\leq n,~~~\text{and}~~~ \frac{1}{ q_{i} }\leq \sum^{n}_{j=0}\frac{\mu_{j}}{ {p(j)}_{i}}.
\ea\ee
 The Besov-Il'in-Nikolski\v{i} inequality \eqref{BIN}  is a very powerful tool and
 plays an important role in the study of partial differential equations (see \cite{[Zheng],[WW],[CT],[BYT],[BM],[GCS],[WWZ]} and references therein).
However, it seems that it is  difficult to derive from both of the inequality $\eqref{BIN}$ and its generalization in \cite{[Hytonen],[Esfahani]}  that
\be\label{spe1}
 \|u\|_{L_{x_{1}}^{4}L_{x_{2}}^{6}(\mathbb{R}^{2})}\leq C\|u\|_{L_{x_{1}}^{2}L_{x_{2}}^{4}(\mathbb{R}^{2})}^{\theta}\|\nabla u\|_{L_{x_{1}}^{2}L_{x_{2}}^{3}(\mathbb{R}^{2})}^{1-\theta},
\ee
owing to the condition $\eqref{BIN}_{2}$ involving each component of vector-valued indices.
In fact, the standard  Gagliardo-Nirenberg inequality
in mixed Lebesgue spaces without anisotropic derivatives such as $\eqref{spe1}$ is still unknown in the literature.
The main objective of this paper is to establish
the most general form of standard Gagliardo-Nirenberg inequality
in mixed Lebesgue spaces.  Our first result reads
\begin{theorem}\label{the1.1}
Let $0 \leq \sigma < s,$ $1 < \overrightarrow{p},\overrightarrow{q},\overrightarrow{r} < \infty,$ $0\leq\theta\leq 1,$ $\Lambda^{s}=(-\Delta)^{\f{s}{2}}$ be defined via
$\widehat{\Lambda^{s} f}(\xi)=|\xi|^{s}\hat{f}(\xi).$
Then there exists a positive constant $C$ independent of $u$ such that
 \be\label{glxied1}
\|\Lambda^{\sigma}u\|_{L^{\overrightarrow{p} } (\mathbb{R}^{n})} \leq C\|u\|_{L^{\overrightarrow{q}}(\mathbb{R}^{n}) }^{\theta}\|\Lambda^{s}u\|_{L^{\overrightarrow{r} } (\mathbb{R}^{n})}^{1-\theta},
\ee
if and only if
\be\label{condi}
\sum_{i=1}^{n}\frac{1}{p_{i}}-\sigma= \sum_{i=1}^{n}\frac{\theta}{q_{i}}+(1-\theta)\left(\sum_{i=1}^{n}\frac{1}{r_{i}}-s\right),~
\frac{1}{ p_{i} }\leq\frac{\theta}{ q_{i} }+\frac{1-\theta}{ r_{i} },
~0\leq\theta\leq 1-\frac{\sigma}{s}.
\ee
\end{theorem}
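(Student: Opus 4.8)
The plan is to prove the two directions separately. For \textbf{necessity}, assume \eqref{glxied1} holds. First I would recover the scaling identity in \eqref{condi} by testing against the dilations $u_\lambda(x)=u(\lambda x)$: since $\|u_\lambda\|_{L^{\overrightarrow{a}}}=\lambda^{-\sum_i 1/a_i}\|u\|_{L^{\overrightarrow{a}}}$ and $\Lambda^{\beta}u_\lambda=\lambda^{\beta}(\Lambda^{\beta}u)(\lambda\cdot)$, inserting $u_\lambda$ into \eqref{glxied1} and matching the powers of $\lambda$ (which must balance for the estimate to survive $\lambda\to 0,\infty$) yields exactly the first identity in \eqref{condi}. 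To obtain the componentwise bounds $\frac1{p_i}\le\frac{\theta}{q_i}+\frac{1-\theta}{r_i}$ I would instead test with one-directional wave packets $u(x)=e^{iMx_i}\eta(x_i/\lambda)\prod_{k\ne i}\chi(x_k)$, where $\eta,\chi\in C_c^\infty$ and $M$ is fixed large enough that $|\xi|$ stays bounded away from $0$ on the (shrinking) frequency support; then $\Lambda^{\sigma}$ and $\Lambda^{s}$ act as smooth, $\lambda$-independent multipliers in the transverse variables, so all three norms scale like $\lambda^{1/p_i},\lambda^{1/q_i},\lambda^{1/r_i}$ as $\lambda\to\infty$, and the surviving inequality between the $\lambda$-powers is precisely $\frac1{p_i}\le\frac{\theta}{q_i}+\frac{1-\theta}{r_i}$. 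Finally, summing this componentwise bound over $i$ and combining with the first identity gives $(1-\theta)s\ge\sigma$, so the upper constraint $\theta\le 1-\sigma/s$ is automatic; the third line of \eqref{condi} is thus a consequence of the first two.

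For \textbf{sufficiency}, assume \eqref{condi}; by density it suffices to treat Schwartz $u$. The idea is to factor the estimate through the balanced exponent $\overrightarrow{m}$ defined by $\frac1{m_i}=\frac{\theta}{q_i}+\frac{1-\theta}{r_i}$. The second condition in \eqref{condi} says exactly $\overrightarrow{m}\le\overrightarrow{p}$, while the first gives $\sum_i(\frac1{m_i}-\frac1{p_i})=(1-\theta)s-\sigma=:d\ge 0$. Writing $\Lambda^{\sigma}u=\Lambda^{-d}\big(\Lambda^{(1-\theta)s}u\big)$ and invoking the Benedek--Panzone mixed-norm fractional-integration inequality \eqref{bpsobo} (with the total gain $d$ matched to the index drop), I would reduce the theorem to the \emph{balanced} Gagliardo--Nirenberg inequality $\|\Lambda^{(1-\theta)s}u\|_{L^{\overrightarrow m}}\lesssim\|u\|_{L^{\overrightarrow q}}^{\theta}\|\Lambda^{s}u\|_{L^{\overrightarrow r}}^{1-\theta}$. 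I expect the only delicate point here to be the boundary cases in which $\frac1{p_i}=\frac1{m_i}$ for some (but not all) $i$, where \eqref{bpsobo} is not directly applicable and the sharp mixed-norm fractional integration allowing equality in individual components must be used (or, equivalently, these endpoints must be reached by a limiting argument).

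It remains to prove the balanced inequality, and this is where the genuinely mixed-norm difficulty lies: because only the $\theta$-averaged relation $\frac1{m_i}=\frac{\theta}{q_i}+\frac{1-\theta}{r_i}$ holds and neither $\overrightarrow q\le\overrightarrow m$ nor $\overrightarrow r\le\overrightarrow m$ need hold componentwise, a naive Littlewood--Paley/Bernstein splitting fails, since the coordinate directions ``disagree'' about which frequency regime is favourable. I would therefore argue by \textbf{complex interpolation}. The balanced estimate is exactly the interior case of the analytic family $T_z=\Lambda^{sz}$ between the two \emph{trivial} endpoints $T_0=\mathrm{Id}$ (measuring $u$ in $L^{\overrightarrow q}$) and $T_1=\Lambda^{s}$ (measuring $\Lambda^s u$ in $L^{\overrightarrow r}$), evaluated at $\mathrm{Re}\,z=1-\theta$. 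The two analytic inputs needed are: (i) the complex-interpolation identity $[L^{\overrightarrow q},L^{\overrightarrow r}]_{1-\theta}=L^{\overrightarrow m}$ for mixed-norm Lebesgue spaces; and (ii) uniform, polynomially growing in $y$, bounds for the imaginary powers $\Lambda^{isy}$ on $L^{\overrightarrow q}$ and on $L^{\overrightarrow r}$, which I would obtain from the Marcinkiewicz/Mikhlin--H\"ormander multiplier theorem in anisotropic $L^{\overrightarrow a}$ with $1<\overrightarrow a<\infty$ (the symbol $|\xi|^{isy}$ satisfies the product-type derivative bounds with constants $\lesssim(1+|y|)^{\kappa}$). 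Running Stein's interpolation on $T_z$ with these endpoint bounds, together with the standard log-convexity of the interpolation norm, produces the balanced estimate and completes the proof. The \textbf{main obstacle}, accordingly, is establishing the mixed-norm multiplier bounds for the imaginary powers with enough uniformity to feed Stein interpolation (and the accompanying interpolation identity); everything else is bookkeeping with the exponents.
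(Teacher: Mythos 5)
Your proposal shares its skeleton with the paper's sufficiency argument --- both factor \eqref{glxied1} through the balanced exponent (your $\overrightarrow{m}$ is the paper's $\overrightarrow{\beta}$), splitting the task into the smoothing embedding $\|\Lambda^{\sigma}u\|_{L^{\overrightarrow{p}}}\lesssim\|\Lambda^{s(1-\theta)}u\|_{L^{\overrightarrow{m}}}$ and the balanced inequality --- but your treatment of each half is genuinely different. For the balanced inequality the paper avoids interpolation theory altogether: it uses the pointwise low/high-frequency bound \eqref{bcd}, $|\Lambda^{s(1-\theta)}u|\leq C(\mathcal{M}u)^{\theta}(\mathcal{M}\Lambda^{s}u)^{1-\theta}$, followed by mixed-norm H\"older and the boundedness of $\mathcal{M}$ on mixed Lebesgue spaces. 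Your Stein-interpolation route (the family $\Lambda^{sz}$ with endpoints $L^{\overrightarrow{q}}$ and $L^{\overrightarrow{r}}$) is viable: the identity $[L^{\overrightarrow{q}},L^{\overrightarrow{r}}]_{1-\theta}=L^{\overrightarrow{m}}$ is known for mixed norms, and the polynomial-in-$y$ bounds for $\Lambda^{isy}$ follow from the very Lizorkin multiplier theorem the paper invokes; this is heavier machinery for the same output. For necessity, your isotropic dilation plus one-directional wave packets is a clean alternative to the paper's argument (which first replaces $\Lambda^{s}$ by $\sum_i\Lambda_i^{s}$ via Lizorkin and then rescales anisotropically, coordinate by coordinate); note only that your packets must be band-limited, i.e. $\hat{\eta},\hat{\chi}\in C_c^{\infty}$ rather than $\eta,\chi\in C_c^{\infty}$, since otherwise the claim that the frequency support stays away from the origin is false. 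Your observation that the third condition in \eqref{condi} follows from summing the second and using the first is exactly the paper's.

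The genuine gap is the boundary case of your reduction step, which you flag but do not resolve. The cited inequality \eqref{bpsobo} requires the strict componentwise inequality $\overrightarrow{m}<\overrightarrow{p}$, whereas \eqref{condi} only gives $1/p_i\leq 1/m_i$, and equality in some (but not all) components is a genuine part of the theorem; indeed Corollary \ref{coro1.3} with $q_i=2$ for some $i$ sits exactly on this boundary face. Neither of your proposed fixes amounts to a citation: a limiting argument in the exponents would need the constant in \eqref{bpsobo} to remain bounded as the componentwise gaps close, which is not part of the cited result, and interpolation cannot reach these cases, because interpolating exponent vectors with strict inequality in every component again produces strict inequality in every component, so the boundary face is unreachable from the interior. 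The ``sharp mixed-norm fractional integration allowing equality in individual components'' is precisely what the paper proves, and it is the main content of its first step: the Lizorkin multiplier theorem is used to replace $\Lambda^{\sigma-s(1-\theta)}=\Lambda^{\sum_j\tau_j}$ by the anisotropic product $\prod_j\Lambda_j^{\tau_j}$ with $\tau_j=1/p_j-1/m_j\leq0$, after which a duality argument applies the one-dimensional Hardy-Littlewood-Sobolev inequality only in those coordinates with $\tau_j<0$, the coordinates with $\tau_j=0$ acting as identities (the paper's $\delta$-function bookkeeping). Without supplying this lemma, or an honest reference for it, your sufficiency argument covers only the case of strict inequality in every component.
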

\begin{remark}
Gagliardo-Nirenberg inequality \eqref{glxied1} in mixed Lebesgue spaces
requires weaker restrictions than that of Besov-Il'in-Nikolski\v{i}'s anisotropic interpolation inequality \eqref{BIN}. In particular, the inequality
   \eqref{spe1} can be deduced from \eqref{glxied1} with $\theta=7/11$. Moreover, our result \eqref{glxied1} with $\theta=0$ improves the Sobolev inequality \eqref{bpsobo} in mixed Lebesgue spaces.
\end{remark}
As a byproduct, one can immediately obtain the following corollary, which was frequently  used in the regularity of the weak solutions of the Navier-Stokes equations (see \cite{[WWZ],[Zheng],[GCS]}).
\begin{coro}\label{coro1.3}
Let $q_{1},q_{2},q_{3}\in[2,\infty)$ and $\f12\leq \sum\limits_{i=1}^{3} \f{1}{q_i}\leq \f32$. Then there exists a positive constant $C=C(q_{1},q_{2},q_{3})$ such that
\begin{align}\label{zc}
\|u\|_{L^{\overrightarrow{q}}(\mathbb{R}^3)}
 \leq C\|\nabla u\|_{L^{2}(\mathbb{R}^{3})}^{\f32-{\sum_{i} \f{1}{q_i}}}\|u\|^{\sum_{i} \f{1}{q_i}-\f12}_{L^{2}(\mathbb{R}^{3})}.
\end{align}
\end{coro}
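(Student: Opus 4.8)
The plan is to obtain Corollary \ref{coro1.3} as a direct specialization of Theorem \ref{the1.1}, with no new analysis required beyond checking that the hypotheses on $q_1,q_2,q_3$ translate exactly into the three conditions of \eqref{condi}. First I would set $n=3$, $\sigma=0$, and $s=1$, and choose the exponent vectors so that the left-hand side of \eqref{glxied1} becomes the desired $\|u\|_{L^{\overrightarrow{q}}(\mathbb{R}^3)}$: that is, take the target vector $\overrightarrow{p}=(q_1,q_2,q_3)$ and the two factor vectors (the one attached to $u$ and the one attached to $\Lambda^{s}u$) both equal to $(2,2,2)$. The single remaining free parameter is $\theta$, which I would fix as $\theta=\sum_{i}\frac{1}{q_i}-\frac12$, so that $1-\theta=\frac32-\sum_{i}\frac{1}{q_i}$; this already produces the two exponents appearing on the right-hand side of \eqref{zc}, with $\|u\|_{L^2}$ (the order-zero factor) raised to $\theta$ and $\|\Lambda^{1}u\|_{L^2}$ raised to $1-\theta$.

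Next I would verify the three requirements in \eqref{condi}. The scaling identity $\eqref{condi}_1$ reads $\sum_{i}\frac{1}{q_i}=\frac{3\theta}{2}+\frac12(1-\theta)=\theta+\frac12$, which is satisfied precisely by the above choice of $\theta$. The per-component condition $\eqref{condi}_2$ collapses, since both factor vectors equal $(2,2,2)$, to $\frac{1}{q_i}\leq\frac{\theta}{2}+\frac{1-\theta}{2}=\frac12$ for each $i$, i.e. to $q_i\geq2$, which is exactly the assumption $q_i\in[2,\infty)$. Finally the admissibility range $\eqref{condi}_3$ becomes $0\leq\theta\leq1-\frac{\sigma}{s}=1$, and substituting $\theta=\sum_{i}\frac{1}{q_i}-\frac12$ turns the two endpoints into $\sum_{i}\frac{1}{q_i}\geq\frac12$ and $\sum_{i}\frac{1}{q_i}\leq\frac32$, which are precisely the two stated bounds on $\sum_{i}\frac{1}{q_i}$. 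Thus every hypothesis of the corollary corresponds to exactly one of the conditions of the theorem.

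The last step is to replace $\Lambda^{1}$ by the gradient: by Plancherel's theorem $\|\Lambda u\|_{L^2(\mathbb{R}^3)}^2=\int_{\mathbb{R}^3}|\xi|^2|\hat{u}(\xi)|^2\,d\xi=\sum_{i=1}^{3}\int_{\mathbb{R}^3}|\xi_i|^2|\hat{u}(\xi)|^2\,d\xi=\|\nabla u\|_{L^2(\mathbb{R}^3)}^2$, so the two norms coincide and the $\Lambda^{s}$-factor in \eqref{glxied1} may be rewritten as $\|\nabla u\|_{L^2}^{1-\theta}$. Combining this identity with the application of Theorem \ref{the1.1} yields \eqref{zc} with a constant $C=C(q_1,q_2,q_3)$. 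I do not expect a genuine obstacle here; the only point requiring care is the bookkeeping in matching \eqref{condi} to the hypotheses, in particular recognizing that the component-wise constraint is exactly $q_i\geq2$ and that the two endpoint bounds on $\sum_{i}\frac{1}{q_i}$ encode precisely the admissible range $0\leq\theta\leq1$.
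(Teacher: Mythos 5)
Your proposal is correct and matches the paper's intent exactly: the paper presents Corollary \ref{coro1.3} as an immediate specialization of Theorem \ref{the1.1}, and your choices ($n=3$, $\sigma=0$, $s=1$, target vector $(q_1,q_2,q_3)$, both factor vectors $(2,2,2)$, $\theta=\sum_i\frac{1}{q_i}-\frac12$) verify all three conditions in \eqref{condi} precisely as required. The concluding identification $\|\Lambda u\|_{L^2}=\|\nabla u\|_{L^2}$ via Plancherel is also the standard step needed to pass from \eqref{glxied1} to \eqref{zc}.
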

Since the first condition in \eqref{condi} for Gagliardo-Nirenberg inequality \eqref{glxied1} in anisotropic Lebesgue
 spaces  is posed indirectly on each component of indices, it seems that the
recent argument developed for  Fourier-Herz spaces in \cite{[Chikami]} and   Lorentz spaces in \cite{[WWY]}, which requires many applications of interpolation inequality \eqref{interi} involving each component, breaks down in the proof of Theorem \ref{the1.1}. To overcome this difficulty, we reformulate the desired  inequality \eqref{glxied1} as
$$
\| \Lambda^{\sigma}u \|_{L^{\overrightarrow{p} } (\mathbb{R}^{n})}\leq C\|\Lambda^{s(1-\theta)}u\|_{L^{\overrightarrow{\beta} } (\mathbb{R}^{n})}\leq C \|\mathcal{M} u \|^{\theta}_{L^{\overrightarrow{q} }(\mathbb{R}^{n})}\|\mathcal{M}(  \Lambda^{s } u)\|^{1-\theta}_{L^{\overrightarrow{r}}(\mathbb{R}^{n})},
~~ \frac{1}{\overrightarrow{\beta}}=\frac{\theta}{\overrightarrow{q}}+\frac{1-\theta}{\overrightarrow{r}}.
$$
Here the first inequality is partially  motivated by  \cite{[BIN],[Hytonen]}, and its proof relies on the celebrated
Marcinkiewicz-Lizorkin multiplier theorem together with a duality argument and Hardy-Littlewood-Sobolev inequality. The second inequality
rests on
  the following pointwise estimate proved by the decomposition of low and high frequencies
$$
 \left|\Lambda^{s(1-\theta)} u(x)\right| \leq C(\mathcal{M} u(x))^{\theta}\left(\mathcal{M} \Lambda^{s} u(x)\right) ^{1-\theta},
$$
whose proof can be found in \cite[p.84]{[BCD]}. This helps us to achieve the proof of Theorem \ref{the1.1}.

In the past decades,  important progress involving the Gagliardo-Nirenberg inequality \eqref{GNI} has been made in Besov spaces by Hajaiej-Molinet-Ozawa-Wang   \cite{[HMOW]}, in Besov-Fourier-Herz spaces by Chikami   \cite{[Chikami]}, and in Besov-Lorentz spaces by \cite{[WWY]} (see also Byeon-Kim-Oh \cite{[BKO]}). Inspired by this, a natural question arises whether Gagliardo-Nirenberg inequality \eqref{GNI} can be extended in the context of anisotropic Besov
  spaces.  Our corresponding result is formulated as follows.
 \begin{theorem}\label{the1.2}
   Assume that $0 \leq \sigma<s<\infty$ and $1 \leq \overrightarrow{q}, \overrightarrow{r}, \overrightarrow{p}\leq \infty.$ In addition, if
$\min\{\sum_{i=1}^{n}\frac{1}{q_{i}},\sum_{i=1}^{n}\frac{1}{r_{i}}\}<\sum_{i=1}^{n}\frac{1}{p_{i}}<\max\{\sum_{i=1}^{n}\frac{1}{q_{i}},\sum_{i=1}^{n}\frac{1}{r_{i}}\}$ or  $\sum_{i=1}^{n}\frac{1}{p_{i}}=\sum_{i=1}^{n}\frac{1}{r_{i}}<\sum_{i=1}^{n}\frac{1}{q_{i}}$, we require that
there exists $\alpha\in [0,1]$  such that
\be\label{addc}
\f{1}{\overrightarrow{p}}=\f{\alpha}{\overrightarrow{q}}+\f{1-\alpha}{\overrightarrow{r}}.
\ee
Then there holds for any $u \in \dot{B}_{\overrightarrow{r},  \infty}^{s}\left(\mathbb{R}^{n}\right) \cap \dot{B}_{\overrightarrow{q}, \infty}^{0}\left(\mathbb{R}^{n}\right)$ that
\be\label{glibxied}
\|u\|_{\dot{B}_{\overrightarrow{p},  1}^{\sigma}(\mathbb{R}^{n})} \leq C\|u\|_{\dot{B}_{\overrightarrow{q},  \infty}^{0}(\mathbb{R}^{n})}^{\theta}\|u\|_{\dot{B}_{\overrightarrow{r},  \infty}^{s}(\mathbb{R}^{n})}^{1-\theta},
\ee
with
$$
\sum_{i=1}^{n}\frac{1}{p_{i}}-\sigma= \sum_{i=1}^{n}\frac{\theta}{q_{i}}+(1-\theta)\left(\sum_{i=1}^{n}\frac{1}{r_{i}}-s\right),~s \neq \sum_{i=1}^{n}\B(\frac{1}{r_{i}}-\frac{1}{q_{i}}\B),~0<\theta<1-\frac{\sigma}{s}.
$$
 \end{theorem}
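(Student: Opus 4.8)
The plan is to argue through the homogeneous Littlewood--Paley decomposition $u=\sum_{j\in\mathbb{Z}}\Delta_j u$, to estimate each dyadic block $\Delta_j u$ in $L^{\overrightarrow{p}}$ separately, and then to sum the resulting bounds. Abbreviating $A=\|u\|_{\dot B^{0}_{\overrightarrow{q},\infty}}$ and $B=\|u\|_{\dot B^{s}_{\overrightarrow{r},\infty}}$, the two input norms give at once the elementary per-block bounds $\|\Delta_j u\|_{L^{\overrightarrow{q}}}\le A$ and $\|\Delta_j u\|_{L^{\overrightarrow{r}}}\le 2^{-js}B$. Since $\Delta_j u$ is frequency-localized to $|\xi|\approx 2^{j}$, the workhorse is the anisotropic Bernstein inequality: whenever $\overrightarrow{a}\le\overrightarrow{b}$ componentwise, $\|\Delta_j u\|_{L^{\overrightarrow{b}}}\le C\,2^{\,j\sum_i(1/a_i-1/b_i)}\|\Delta_j u\|_{L^{\overrightarrow{a}}}$. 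The entire game is to manufacture, for $\|\Delta_j u\|_{L^{\overrightarrow{p}}}$, one bound that decays as $j\to-\infty$ (fed by the $\overrightarrow{q}$-information) and one that decays as $j\to+\infty$ (fed by the $\overrightarrow{r}$-information), and then to split the series $\sum_j 2^{j\sigma}\|\Delta_j u\|_{L^{\overrightarrow{p}}}$ at an optimized threshold.

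In the regime where $\overrightarrow{p}$ dominates both endpoints, i.e. $p_i\ge\max\{q_i,r_i\}$ for all $i$ so that both Bernstein inequalities are legitimate, I would use $\|\Delta_j u\|_{L^{\overrightarrow{p}}}\le C\,2^{\,j(\sum_i 1/q_i-\sum_i 1/p_i)}A$ for low frequencies and $\|\Delta_j u\|_{L^{\overrightarrow{p}}}\le C\,2^{\,j(\sum_i 1/r_i-\sum_i 1/p_i-s)}B$ for high frequencies. After multiplication by $2^{j\sigma}$ the two exponents are $a=\sigma+\sum_i(1/q_i-1/p_i)$ and $b=\sigma+\sum_i(1/r_i-1/p_i)-s$, and a direct computation using the scaling relation yields $a=(1-\theta)\bigl(s+\sum_i(1/q_i-1/r_i)\bigr)$ and $b=-\theta\bigl(s+\sum_i(1/q_i-1/r_i)\bigr)$. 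Hence $a$ and $b$ are nonzero and of opposite sign precisely when $0<\theta<1$ together with the non-degeneracy hypothesis $s\ne\sum_i(1/r_i-1/q_i)$, which guarantees the genuine balance $a\ne b$; the sharper strictness $\theta<1-\sigma/s$ is what keeps the relevant geometric series summable down to the target regularity $\sigma<s$. Summing each half-line and selecting the threshold $N$ so that the two contributions coincide then produces $\sum_j 2^{j\sigma}\|\Delta_j u\|_{L^{\overrightarrow{p}}}\le C\,A^{\theta}B^{1-\theta}$, with $\theta=-b/(a-b)$ forced to be exactly the value fixed by the scaling identity; the summation index $1$ on the left is an automatic consequence of the strict two-sided geometric decay.

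The main obstacle is the genuinely intermediate regime, namely the cases in which $\sum_i 1/p_i$ lies strictly between $\sum_i 1/q_i$ and $\sum_i 1/r_i$, together with the borderline $\sum_i 1/p_i=\sum_i 1/r_i<\sum_i 1/q_i$. Here neither $\overrightarrow{q}\le\overrightarrow{p}$ nor $\overrightarrow{r}\le\overrightarrow{p}$ need hold, so neither Bernstein inequality is directly available, and this is exactly the reason hypothesis \eqref{addc} is imposed: with \eqref{addc} in force, $\overrightarrow{p}$ lies on the segment joining $\overrightarrow{q}$ and $\overrightarrow{r}$, and the anisotropic H\"older (interpolation) inequality supplies $\|\Delta_j u\|_{L^{\overrightarrow{p}}}\le\|\Delta_j u\|_{L^{\overrightarrow{q}}}^{\alpha}\|\Delta_j u\|_{L^{\overrightarrow{r}}}^{1-\alpha}$. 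The delicate point, which I expect to require the most care, is that a single such interpolation furnishes only one slope in $j$, whereas the two-sided summation demands two of opposite sign; recovering the second slope means interpolating to a nearby intermediate index and correcting it back to $\overrightarrow{p}$ by Bernstein, a step that forces one to track the componentwise ordering of $1/q_i$ and $1/r_i$ (which, unlike the scalar setting, may change sign across coordinates). I would therefore treat the intermediate regime by a short case analysis governed by \eqref{addc}, interpolating to an index on which Bernstein is admissible and then optimizing the threshold exactly as above; alternatively, the inequality established in the extreme configurations can be propagated into the intermediate range by real interpolation of the already-proven estimates. In either route the scaling identity continues to pin down $\theta$, and the condition $s\ne\sum_i(1/r_i-1/q_i)$ continues to keep the two geometric series summable.
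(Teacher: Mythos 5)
Your proposal follows essentially the same route as the paper's proof of Theorem \ref{the1.2}: homogeneous Littlewood--Paley decomposition, the per-block inputs $\|\dot{\Delta}_{j}u\|_{L^{\overrightarrow{q}}}\le A$ and $\|\dot{\Delta}_{j}u\|_{L^{\overrightarrow{r}}}\le 2^{-js}B$ upgraded to $L^{\overrightarrow{p}}$ either by the anisotropic Bernstein inequality of Lemma \ref{lem2.1} or by H\"older interpolation through \eqref{addc}, and then a split of $\sum_{j}2^{j\sigma}\|\dot{\Delta}_{j}u\|_{L^{\overrightarrow{p}}}$ at an optimized threshold. Your slope computation $a=(1-\theta)\bigl(s+\sum_{i}(1/q_{i}-1/r_{i})\bigr)$, $b=-\theta\bigl(s+\sum_{i}(1/q_{i}-1/r_{i})\bigr)$ is exactly the identity the paper displays just before its proof (and exploits again in \eqref{wwkey}), and your device of ``interpolating to a nearby intermediate index and correcting back by Bernstein'' is precisely the paper's $\varepsilon$-perturbation \eqref{5191} used in cases ($I_{21}$) and ($I_{34}$).

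The one genuine gap concerns coverage of the regimes where \eqref{addc} is \emph{not} assumed. There you work under the componentwise ordering $p_{i}\ge\max\{q_{i},r_{i}\}$ for every $i$, but in those regimes the theorem supplies only sum conditions such as $\sum_{i}1/p_{i}\le\min\{\sum_{i}1/q_{i},\sum_{i}1/r_{i}\}$, and a sum condition does not imply the componentwise one (take $n=2$, $1/\overrightarrow{p}=(1/10,\,1/2)$, $1/\overrightarrow{q}=(2/5,\,3/10)$). This is not a removable technicality: mixed-norm Bernstein genuinely requires componentwise ordering. Indeed, for $u=f(x_{1})g_{R}(x_{2})$ with $\hat{f}$ supported where $|\xi_{1}|\approx 2^{j}$ and $g_{R}=\phi(\cdot/R)$ low-frequency, one has $\|u\|_{L^{\overrightarrow{p}}}/\|u\|_{L^{\overrightarrow{q}}}\approx R^{1/p_{2}-1/q_{2}}\to\infty$ as $R\to\infty$ whenever $p_{2}<q_{2}$, so no power of $2^{j}$ can rescue the per-block estimate. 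Consequently your ``extreme regime'' argument establishes the inequality only for a strict subset of the index configurations the theorem asserts, and your fallback suggestion---real interpolation between already-proven configurations---is not worked out and does not obviously yield the missing multiplicative estimate. You should know, however, that the paper's own proof has the same defect: in cases ($I_{31}$)--($I_{33}$) it invokes Lemma \ref{lem2.1} (whose proof via Young's inequality \eqref{YoungI} needs $\overrightarrow{q}\le\overrightarrow{p}$ or $\overrightarrow{r}\le\overrightarrow{p}$ componentwise) on the strength of sum conditions alone. So your attempt reproduces the paper's argument faithfully---indeed more carefully, since you flag explicitly where Bernstein is legitimate, which the paper never does.
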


\begin{remark}
Under the condition \eqref{addc}, in view of the embedding relations
$\dot{B}_{\overrightarrow{p},  1}^{s}\hookrightarrow \dot{H}^{s}_{\overrightarrow{p} }\hookrightarrow\dot{B}_{\overrightarrow{p},  \infty}^{s}$ (see Lemma \ref{lem2.2}), it can be seen that Theorem \ref{the1.2} partially implies Theorem \ref{the1.1}. It is an interesting question how to remove the additional condition \eqref{addc}.
\end{remark}
The proof of Theorem \ref{the1.2}  is close to that in the recent work \cite{[Chikami],[WWY]}. The new ingredients are mainly the generalized Bernstein  type inequalities for anisotropic Lebesgue spaces  in  Lemma \ref{lem2.1}, which are of independent interest  and may be applied to other topics.

Next, we give an application of Gagliardo-Nirenberg inequality \eqref{glxied1} in Theorem  \ref{the1.1} in the study of energy conservation for weak solutions to the three-dimensional Navier-Stokes equations.
\subsection{An application of  Gagliardo-Nirenberg inequality in  anisotropic Lebesgue spaces to the Navier-Stokes system}
The classical  incompressible 3D  Navier-Stokes equations can be written as
\be\left\{\ba\label{NS}
&v_{t} -\Delta  v+ v\cdot\ti
v  +\nabla \Pi=0, \\
&\Div v=0,\\
&v|_{t=0}=v_0,
\ea\right.\ee
 where  the unknown vector $v=v(x,t)$ describes the flow  velocity field, and the scalar function $\Pi$ represents the   pressure.
 The  initial datum $v_{0}$ is given and satisfies the divergence-free condition.
 The global Leray-Hopf finite energy weak solution of the 3D  Navier-Stokes system has been  constructed   by Leray \cite{[Leray1]} for the Cauchy problem  and by Hopf \cite{[Hopf]} for Dirichlet  problem, and obeys the energy inequality
 $$
 \|v(T)\|_{L^{2}(\Omega)}^{2}+2 \int_{0}^{T}\|\nabla v\|_{L^{2}(\Omega)}^{2}ds\leq \|v_0\|_{L^{2}(\Omega)}^{2}.
 $$
   Unfortunately, both the conservation of energy
 and the regularity for this type of weak solutions are unknown. Here, we focus on the results involving energy  equality of the  Leray-Hopf weak solutions.
 The sufficient conditions for
  energy  equality  of weak solutions to the 3D Navier-Stokes equations \eqref{NS} have attracted a lot of attention (see \cite{[Lions],[Shinbrot],[BY],[CL],[Zhang],[WWY],[WY],[WMH]} and
  references therein). Concretely speaking,
  a weak solution to the Dirichlet problem on a bounded smooth domain $\Omega$ of  3D Navier-Stokes equations \eqref{NS} satisfies energy equality if one of the following four conditions holds
\begin{itemize}
 \item
 Lions \cite{[Lions]}:  $v\in L^{4}(0,T;L^{4}(\Omega));$
\item Shinbrot \cite{[Shinbrot]}:
$
v\in L^{p}(0,T;L^{q}(\Omega)),~\text{with}~\f{2}{p}+
 \f{2}{q}=1~\text{and}~q\geq 4;$
\item  Beirao da Veiga-Yang \cite{[BY]}:
$v\in L^{p}(0,T;L^{q}(\Omega)),$~with~ $\f{1}{p}+
 \f{3}{q}=1$~and~$3<q< 4;$
  \item Berselli-Chiodaroli  \cite{[BC]},  Beirao da Veiga-Yang  \cite{[BY2]},  Zhang \cite{[Zhang]}:
$
\nabla v \in L^{p} (0, T ; L^{q} (\Omega ) ),~\text{with}~
\frac{1}{p}+\frac{3 }{q}=2~\text{and}~\frac{3 }{2}<q<\frac{9}{5},~\text{or}~
\frac{1}{p}+\frac{6}{5 q}=1~\text{and}~q \geq \frac{9}{5}. $
   \end{itemize}
   In \cite{[WY]}, the authors observed that  all the  aforementioned  results on the  periodic domain $\mathbb{T}^{3} $ are a consequence of  the following
   energy conservation criterion via a combination of the velocity and the gradient of velocity
\be\label{wy}
  v\in L^{\f{2k}{k-1}} (0,T;L^{\f{2\ell}{\ell-1}}(\mathbb{T}^{3}) ) ~~  \text{and}~~ \nabla v \in L^{k} (0,T; L^{\ell}(\mathbb{T}^{3}) ).
 \ee
A parallel result of energy conservation criterion \eqref{wy}
 for both the Cauchy problem and Dirichlet problem is given in \cite{[WMH]}.  We also refer the readers to \cite{[CL],[CCFS],[WWY],[WMH]} for the    sufficient conditions keeping the energy of Navier-Stokes equations on the whole space $\mathbb{R}^{3}$.

Finally, we present some new energy conservation criteria in anisotropic Lebesgue spaces   via the Gagliardo-Nirenberg inequality \eqref{glxied1}.
 \begin{theorem}\label{the1.3} The Leray-Hopf weak solutions to
  Navier-Stokes equations \eqref{NS}
    are persistence of energy  if one of the following four conditions is satisfied
  \begin{enumerate}[(1)]
 \item $v\in L^{\f{2p}{p-1}}(0,T;\,L^{\f{2q_{1}}{q_{1}-1}}L^{\f{2q_{2}}{q_{2}-1}}
L^{\f{2q_{3}}{q_{3}-1}}
(\mathbb{R}^{3}))~\text{and}~ \nabla v \in L^{p} (0, T ; L^{\overrightarrow{q}} (\mathbb{R}^{3} ) ),~1<p,\overrightarrow{q}\leq\infty$;
 \item  $v\in L^{p}(0,T;L^{\overrightarrow{q}}(\mathbb{R}^{3})),~\text{with}~\f{1}{p}+
 \f{1}{q_{1}}+
 \f{1}{q_{2}}+
 \f{1}{q_{3}}=1~\text{and}~ \sum_{i=1}^{3}\frac{1}{q_{i}}\leq1,~1<\overrightarrow{q}\leq4$;
 \item $\nabla v \in L^{p} (0, T ; L^{\overrightarrow{q}} (\mathbb{R}^{3}) ),~\text{with}~
\frac  {1}{p}+\f{1}{q_{1}}+
 \f{1}{q_{2}}+
 \f{1}{q_{3}} =2~\text{and}~ \sum_{i=1}^{3}\frac{1}{q_{i}}\leq2$, $~1<\overrightarrow{q}\leq\f95$;

 \item $\nabla v \in L^{p} (0, T ; L^{\overrightarrow{q}} (\mathbb{R}^{3}) ),~\text{with}~
\frac{1}{p}+\frac{2}{5  }(\f{1}{q_{1}}+
 \f{1}{q_{2}}+
 \f{1}{q_{3}})=1~\text{and}~\sum_{i=1}^{3}\frac{1}{q_{i}}\leq\f53,
 ~1<\overrightarrow{q}\leq3.$

 \end{enumerate}
 \end{theorem}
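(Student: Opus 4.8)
The plan is to regard criterion (1) as the \emph{master} condition and to deduce (2)--(4) from it by interpolating the Leray--Hopf bounds $v\in L^{\infty}(0,T;L^{2})\cap L^{2}(0,T;\dot H^{1})$ against the hypothesis of each case through the anisotropic Gagliardo--Nirenberg inequality \eqref{glxied1} (and its Corollary \ref{coro1.3}). Since a Leray--Hopf solution already satisfies the energy inequality, it suffices in each case to recover the reverse inequality. For the master criterion (1) I would argue by mollification in the spirit of Constantin--E--Titi, following the whole-space treatment of \cite{[WMH]}. Writing $v^{\varepsilon}=v\ast\rho_{\varepsilon}$, I test the mollified system \eqref{NS} with $v^{\varepsilon}$ and integrate by parts; the pressure drops out because $\operatorname{div}v^{\varepsilon}=0$, and using $v\cdot\nabla v=\operatorname{div}(v\otimes v)$ together with the cancellation $\int(v^{\varepsilon}\otimes v^{\varepsilon}):\nabla v^{\varepsilon}=0$, the only obstruction to energy equality after letting $\varepsilon\to0$ in the linear terms is the flux
\[
J_{\varepsilon}=-\int_{0}^{T}\!\!\!\int_{\mathbb{R}^{3}}R^{\varepsilon}:\nabla v^{\varepsilon}\,dx\,dt,\qquad R^{\varepsilon}=(v\otimes v)^{\varepsilon}-v^{\varepsilon}\otimes v^{\varepsilon}.
\]
Anisotropic H\"older in space gives $|J_{\varepsilon}|\lesssim\int_{0}^{T}\|\nabla v^{\varepsilon}\|_{L^{\overrightarrow{q}}}\|v-v^{\varepsilon}\|_{L^{2\overrightarrow{q}/(\overrightarrow{q}-1)}}^{2}\,dt$, and a second H\"older in time with exponents $p$ and $p/(p-1)$ bounds this by $\|\nabla v^{\varepsilon}\|_{L^{p}_{t}L^{\overrightarrow{q}}}\,\|v-v^{\varepsilon}\|_{L^{2p/(p-1)}_{t}L^{2\overrightarrow{q}/(\overrightarrow{q}-1)}_{x}}^{2}$, which tends to $0$ since the gradient factor stays bounded while $v^{\varepsilon}\to v$ strongly in precisely the mixed-norm space assumed in (1).

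For the reductions I would keep the gradient pair of (1) whenever a gradient hypothesis is available and manufacture the matching velocity bound by \eqref{glxied1}; write $\Sigma=\sum_{i=1}^{3}\tfrac{1}{q_{i}}$ for the current $\overrightarrow{q}$. In case (4) I take $(p,\overrightarrow{q})$ as given and estimate $\|v\|_{L^{2\overrightarrow{q}/(\overrightarrow{q}-1)}}\le C\|v\|_{L^{2}}^{\theta}\|\nabla v\|_{L^{\overrightarrow{q}}}^{1-\theta}$ with $\theta=\tfrac{5-3\Sigma}{5-2\Sigma}$, which lies in $(0,1)$ exactly when $\Sigma<5/3$; the relation $\tfrac1p+\tfrac25\Sigma=1$ makes the time integration close with equality, and $\overrightarrow{q}\le3$ is what forces the componentwise requirement $q_{i}\le5/\Sigma$ in \eqref{condi}. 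Case (3) is the complementary range $\Sigma\ge5/3$, where this $\theta$ is nonpositive and the direct interpolation fails; there I would instead first interpolate the gradient in space-time between $L^{2}_tL^{2}_x$ and $L^{p}_tL^{\overrightarrow{q}}_x$ to place $v$ in some $L^{r}_t\dot W^{1,\overrightarrow{b}}_x$ and then apply the anisotropic Sobolev embedding (the $\theta=0$ case of \eqref{glxied1}), choosing the interpolation weight so that $\dot W^{1,\overrightarrow{b}}\hookrightarrow L^{2\overrightarrow{q}/(\overrightarrow{q}-1)}$; the scaling $\tfrac1p+\Sigma=2$ then forces $r=2p/(p-1)$, and $\overrightarrow{q}\le9/5$ secures the embedding. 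Finally, case (2) carries no gradient hypothesis, so I would use the Leray--Hopf gradient $\nabla v\in L^{2}_tL^{2}_x$ as the pair in (1), reducing matters to Lions' condition $v\in L^{4}(0,T;L^{4})$ \cite{[Lions]}; this follows from $v\in L^{p}(0,T;L^{\overrightarrow{q}})$ and $v\in L^{2}(0,T;\dot H^{1})$ via \eqref{glxied1}, the borderline scaling $\tfrac1p+\Sigma=1$ again producing equality in the time integration.

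The main obstacle is twofold. Analytically, the commutator and mollifier estimates must be carried out in genuinely mixed-norm spaces: one needs anisotropic H\"older, the uniform bound $\|\nabla v^{\varepsilon}\|_{L^{\overrightarrow{q}}}\le\|\nabla v\|_{L^{\overrightarrow{q}}}$, and the strong convergence $v^{\varepsilon}\to v$ in $L^{a}_{t}L^{\overrightarrow{b}}_{x}$, all of which I would justify as in \cite{[WY],[WMH]}. On the bookkeeping side, the delicate point is that for each of (2)--(4) the \emph{spatial} scaling in \eqref{condi} and the \emph{temporal} integrability must close simultaneously; this is exactly what dictates the branch taken (direct Gagliardo--Nirenberg for (2) and (4) versus gradient interpolation followed by Sobolev embedding for (3)), and verifying the per-component inequality $\tfrac{1}{p_{i}}\le\tfrac{\theta}{q_{i}}+\tfrac{1-\theta}{r_{i}}$ is precisely where the restrictions $\overrightarrow{q}\le4,\ 9/5,\ 3$ are consumed.
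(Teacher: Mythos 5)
Your overall architecture matches the paper's: criterion (1) is the master condition, (2) reduces to Lions's class $L^{4}(0,T;L^{4})$ \cite{[Lions]} via the anisotropic Gagliardo--Nirenberg inequality \eqref{glxied1}, and (3), (4) reduce to (1) by manufacturing the velocity bound $v\in L^{\f{2p}{p-1}}(0,T;L^{\f{2q_{1}}{q_{1}-1}}L^{\f{2q_{2}}{q_{2}-1}}L^{\f{2q_{3}}{q_{3}-1}}(\mathbb{R}^{3}))$; your treatments of (2) and (4) coincide with the paper's, with the same exponents and the same use of the Leray--Hopf bounds. The differences are in implementation. For (1), the paper never mollifies in physical space: it applies the Littlewood--Paley operator $S_{N}$ to \eqref{NS}, tests with $S_{N}v$, and after the divergence-free cancellation the flux takes the form $\int_{0}^{t}\int_{\mathbb{R}^{3}}S_{N}\big[v_{j}(\mathrm{Id}-S_{N}^{2})v_{i}\big]S_{N}\partial_{j}v_{i}\,dx\,ds$, which is killed by anisotropic H\"older \eqref{HIAL}, Young \eqref{YoungI}, and the approximate-identity Lemma \ref{lem2.3}; no commutator structure is needed, only the vanishing of $(\mathrm{Id}-S_{N})v$ in the mixed norm. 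Your Constantin--E--Titi route (essentially that of \cite{[WMH]}, the paper's source for \eqref{wy}) is equally viable and rests on the same two pillars (uniform boundedness of the smoothing operator on $L^{\overrightarrow{q}}$ via Young, strong mixed-norm approximation via continuity of translation), but see the caveat below. For (3), you interpolate the gradient in space--time first and then apply the $\theta=0$ Sobolev case of \eqref{glxied1}, whereas the paper first applies \eqref{glxied1} spatially between $\|\nabla v\|_{L^{2}}$ and $\|v\|_{L^{\f{3q_{1}}{3-q_{1}}}L^{\f{3q_{2}}{3-q_{2}}}L^{\f{3q_{3}}{3-q_{3}}}}$ and then uses the embedding \eqref{bpsobo}; both computations land on the same membership $v\in L^{\f{2p}{p-1}}_{t}L^{\f{2\overrightarrow{q}}{\overrightarrow{q}-1}}_{x}$ and consume $\overrightarrow{q}\leq\f95$ through the componentwise condition in \eqref{condi}, so this is a reordering rather than a genuinely different argument.

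The one step that does not hold as written is your flux bound $|J_{\varepsilon}|\lesssim\int_{0}^{T}\|\nabla v^{\varepsilon}\|_{L^{\overrightarrow{q}}}\|v-v^{\varepsilon}\|^{2}_{L^{2\overrightarrow{q}/(\overrightarrow{q}-1)}}\,dt$. The CET commutator satisfies $R^{\varepsilon}=\int\rho_{\varepsilon}(y)\,\delta_{y}v\otimes\delta_{y}v\,dy-(v-v^{\varepsilon})\otimes(v-v^{\varepsilon})$ with $\delta_{y}v(x)=v(x-y)-v(x)$, and the first term is \emph{not} controlled by $\|v-v^{\varepsilon}\|^{2}$ (cancellation in $v-v^{\varepsilon}=\int\rho_{\varepsilon}(y)\delta_{y}v\,dy$ can make that norm much smaller than the individual differences, e.g.\ by symmetry of $\rho$). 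You must estimate it separately, via the generalized Minkowski inequality, by $\int\rho_{\varepsilon}(y)\|\delta_{y}v\|^{2}_{L^{2\overrightarrow{q}/(\overrightarrow{q}-1)}}\,dy$ and then invoke strong continuity of translation in mixed Lebesgue spaces with finite exponents (all exponents $2q_{i}/(q_{i}-1)$ lie in $[2,\infty)$ even when $q_{i}=\infty$); this is exactly the mechanism of the paper's Lemma \ref{lem2.3}, which rests on \cite[Theorem 1.5]{[BIN]}. Since this is the same tool you already invoke for $v^{\varepsilon}\to v$, the omission is repairable within your framework, but the displayed inequality itself is false and the extra term must appear in the proof.
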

 \begin{remark}
The first criterion in Theorem \ref{the1.3}  is  a generalization of all the corresponding known results in terms of the velocity and the gradient of the velocity.
 For the Dirichlet problem on a bounded smooth domain $\Omega$,
 one may establish similar results to this theorem
 by   the  approximation sequence developed   in \cite{[Galdi],[Shinbrot],[Lions],[BC],[WMH]}.
 \end{remark}

 \begin{remark}
 In the spirit of \cite{[WMH],[CL],[CCFS]}, we invoke the Littlewood-Paley theory in the proof of (1) in Theorem \ref{the1.3} to show that energy conservation class \eqref{wy} can also be extended in context of anisotropic Lebesgue spaces.
 The proof of (2) in this theorem is  a combination of  Gagliardo-Nirenberg inequality \eqref{glxied1} and Lions's class $v\in L^{4}(0,T;L^{4}(\mathbb{R}^{3})).$ We conclude the results in
 (3) and (4) by (1)  together with the Gagliardo-Nirenberg inequality \eqref{glxied1}.
 \end{remark}
 \begin{remark}
 The sufficient class (1) for  the weak solutions keeping the energy covers \eqref{wy} on the wholes space in \cite{[WMH]}.
The criteria (2) and (3) in  Theorem \ref{the1.3}  are a  generalization of  corresponding results in \cite{[BY],[Zhang]}, where the energy of 3D Navier-Stokes equations \eqref{NS} is conserved provided
$v\in L^{p}(0,T;L^{q}(\mathbb{R}^{3}))$ with $\f{1}{p}+
 \f{3}{q}=1$ and $3<q< 4,$ or $\nabla v \in L^{p} (0, T ; L^{q} (\mathbb{R}^{3} ) )$ with $\frac{1}{p}+\frac{3 }{q}=2$ and $\frac{3 }{2}<q<\frac{9}{5}$.
 \end{remark}

The rest of this  paper is organized as follows. In Section 2,
we  recall some basic  materials of  anisotropic Lebesgue spaces and anisotropic Besov spaces. Several key lemmas such as Bernstein type inequalities in anisotropic Lebesgue spaces and an approximate identity lemma are also established in this section.
Section 3 is devoted to the proof of Theorem \ref{the1.1}.
Section 4 contains the proof of Gagliardo-Nirenberg inequality \eqref{glibxied} in anisotropic Besov
  spaces. In Section 5, we present an application  of Gagliardo-Nirenberg inequality \eqref{glxied1} in anisotropic Lebesgue spaces to the
   energy conservation criteria of weak solutions to 3D Navier-Stokes equations, and finally prove Theorem \ref{the1.3}.
\section{Notations and  key auxiliary lemmas} \label{section2}
To begin with, we introduce some notations used in this paper.
 For $p\in [1,\,\infty]$, the notation $L^{p}(0,T;X)$ stands for the set of measurable functions $f(x,t)$ on the interval $(0,T)$  with values in $X$ and $\|f(\cdot,t)\|_{X}$ belonging to $L^{p}(0,T)$.
For a vector $\overrightarrow{p}=(p_1,p_2,\ldots,p_n)\in [1,\infty]^{n},$ we denote $1/\overrightarrow{p}=(1/p_1,1/p_2,\ldots,1/p_n).$ An inequality  between the vector $\overrightarrow{p}$ and a real number $\beta\in [1,\infty]$ is understood as the corresponding  inequality between each component $p_j$ and the number $\beta$ for $j=1,2,\ldots,n$. An  inequality  between  vectors  in  $\mathbb{R}^{n}$ signifies  the  corresponding  inequality  between  all  the  respective components. For simplicity, we write $\sum\limits_{i} f_{i}=: \sum f_{i}.$ The sign function $sgn(x)$ is defined by
$$
sgn(x)=\left\{\ba
-1& \quad\text { if }~ x<0, \\
0& \quad\text { if }~ x=0,\\
1& \quad\text { if }~ x>0.
\ea\right.
$$
Throughout this paper, $C$ is an absolute constant which may be different from line to line unless otherwise stated. $a\approx b$ means that $C^{-1}b\leq a\leq Cb$ for some constant $C>1$. $\chi_{\Omega}$ stands for the characteristic function of the set $\Omega\subset \mathbb{R}^{n}$. $|E|$ represents the $n$-dimensional Lebesgue measure of a set $E\subset \mathbb{R}^{n}$. Let $\mathcal{M}$ be the Hardy-Littlewood maximal operator and its definition is given by
  $$\mathcal{M}f(x)=\sup_{r>0}\f{1}{|B(r)|}\int_{B(r)}|f(x-y)|dy,$$
where $f$ is any locally integrable function on $\mathbb{R}^{n}$, and $B(r)$ is the open ball centered at the origin with radius $r>0$.

\subsection{Basic materials on anisotropic Lebesgue spaces}

For $x=(x_1,x_2,\ldots,x_n)\in\mathbb{R}^{n}$ and $\overrightarrow{q}=(q_1,q_2,\ldots,q_n)\in [1,\infty]^{n}$,
a function $f(x)$ belongs to  the anisotropic Lebesgue space $L^{\overrightarrow{q}}(\mathbb{R}^{n})$
if
  $$\|f\|_{L^{\overrightarrow{q}}(\mathbb{R}^{n})}=\|f\|_{L^{q_{1}}L^{q_{2}}\cdots L^{q_{n}}(\mathbb{R}^{n})}=
  \B\|\cdots\big\|\|f\|_{L_{x_1}^{q_{1}}(\mathbb{R})}\big\|_{L_{x_2}^{q_{2}}(\mathbb{R})}\cdots\B\|_{L_{x_n}^{q_{n}} (\mathbb{R})}<\infty.  $$
The study of anisotropic Lebesgue spaces  first appears in Benedek-Panzone \cite{[BP]}.
We list several useful properties of anisotropic Lebesgue spaces as follows.
\begin{itemize}
\item The H\"older inequality in anisotropic Lebesgue spaces \cite{[BP]}
 \be\label{HIAL}
 \|fg\|_{L^{1}(\mathbb{R}^{n})}\leq \|f \|_{L^{\overrightarrow{r}}(\mathbb{R}^{n})} \| g\|_{L^{\overrightarrow{s}}(\mathbb{R}^{n})}~~\text{with}~~ \f{1}{\overrightarrow{r}}+\f{1}{\overrightarrow{s}}=1,~1\leq\overrightarrow{r}\leq\infty.
 \ee

\item The interpolation  inequality in anisotropic Lebesgue spaces \cite{[BP]}
        \be\label{interi}
\|f\|_{L^{\overrightarrow{r}}(\mathbb{R}^{n}) }
\leq \|f\|^{\alpha}_{L^{\overrightarrow{s}}(\mathbb{R}^{n}) }\|f\|^{1-\alpha}_{L^{\overrightarrow{t}}(\mathbb{R}^{n}) }~~\text{with}~~\f{1}{\overrightarrow{r}}=\f{\alpha}{\overrightarrow{s}}+\f{1-\alpha}{\overrightarrow{t}},~0\leq\alpha\leq 1,~1\leq\overrightarrow{s},\overrightarrow{t}\leq\infty.
\ee
\item  Young's  inequality in anisotropic Lebesgue spaces  \cite{[BP]} \\
\begin{align}\label{YoungI}
\|f\ast g\|_{L^{\overrightarrow{r}}(\mathbb{R}^{n}) }
\leq \|f\|_{L^{\overrightarrow{p}}(\mathbb{R}^{n}) }\|g\|_{L^{\overrightarrow{q}}(\mathbb{R}^{n})}~~\text{with}~~1+\frac{1}{\overrightarrow{r}}=\frac{1}{\overrightarrow{p}}+\frac{1}{\overrightarrow{q}},~1\leq \overrightarrow{p},\overrightarrow{q}\leq \infty.
\end{align}
\end{itemize}

\subsection{Generalized Bernstein inequality in anisotropic Lebesgue spaces}
As an application of Young's inequality \eqref{YoungI}, we may derive the generalized Bernstein inequalities in anisotropic Lebesgue spaces as follows.
\begin{lemma}\label{lem2.1}
Let a ball $B=\left\{\xi \in \mathbb{R}^{n}: |\xi| \leq R\right\}$ with $0<R<\infty$ and an annulus $\mathcal{C}=\left\{\xi \in \mathbb{R}^{n}: r_{1} \leq|\xi| \leq r_{2}\right\}$ with $0<r_{1}<r_{2}<\infty$. Then a constant $C>1$ exists such that for any nonnegative integer $k,$ any couple $(\overrightarrow{p}, \overrightarrow{q})$ with $1\leq\overrightarrow{p} \leq\overrightarrow{q}\leq\infty,$ any $\lambda\in (0, \infty)$, and any function $u$ in $L^{\overrightarrow{p}}(\mathbb{R}^{n})$, there hold
\begin{align}
 &  \sup _{|\alpha|=k}\left\|\partial^{\alpha} u\right\|_{L^{\overrightarrow{q} }(\mathbb{R}^{n})} \leq C \lambda^{k+\sum_{i}\left(\frac{1}{p_{i}}-\frac{1}{q_{i}}\right)}\|u\|_{L^{\overrightarrow{p} }(\mathbb{R}^{n})} ~~ \text{with}~~ \operatorname{supp} \widehat{u} \subset \lambda B,\label{bern2}\\
&  C^{-1} \lambda^{k}\|u\|_{L^{\overrightarrow{p}}(\mathbb{R}^{n})} \leq \sup _{|\alpha|=k}\left\|\partial^{\alpha} u\right\|_{L^{\overrightarrow{p}}(\mathbb{R}^{n})} \leq C \lambda^{k}\|u\|_{L^{\overrightarrow{p}}(\mathbb{R}^{n})} ~~ \text{with}~~ \operatorname{supp} \widehat{u} \subset \lambda \mathcal{C},\label{bern4}
\end{align}
where $\lambda B=\left\{\xi \in \mathbb{R}^{n}: |\xi| \leq \lambda R\right\}$ and $\lambda \mathcal{C}=\left\{\xi \in \mathbb{R}^{n}: \lambda r_{1} \leq|\xi| \leq \lambda r_{2}\right\}$.
\end{lemma}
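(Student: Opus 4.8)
The plan is to reduce both estimates to Young's inequality in anisotropic Lebesgue spaces \eqref{YoungI}, following the classical Bernstein scheme but with the anisotropic convolution bound in place of the usual one. The one scaling fact I will use throughout is that, for $g\in L^{\overrightarrow{m}}(\mathbb{R}^{n})$ and $\lambda>0$, the iterated norm satisfies $\|g(\lambda\,\cdot)\|_{L^{\overrightarrow{m}}(\mathbb{R}^{n})}=\lambda^{-\sum_i 1/m_i}\|g\|_{L^{\overrightarrow{m}}(\mathbb{R}^{n})}$, which follows by changing variables one coordinate at a time in the definition of $L^{\overrightarrow{m}}$. I will also repeatedly use that a $C_c^\infty$ symbol has Schwartz, hence $L^{\overrightarrow{m}}$, inverse Fourier transform, and that all arising constants may be taken as a maximum over the finite set $\{\alpha:|\alpha|=k\}$, so they are independent of $u$ and $\lambda$.

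For \eqref{bern2}, first I would fix $\phi\in C_c^\infty(\mathbb{R}^{n})$ with $\phi\equiv 1$ on $B=\{|\xi|\le R\}$; since $\operatorname{supp}\widehat{u}\subset\lambda B$ we have $\widehat{u}(\xi)=\phi(\lambda^{-1}\xi)\widehat{u}(\xi)$. Setting $h_\alpha(\xi)=(i\xi)^{\alpha}\phi(\xi)$ and exploiting the homogeneity $h_\alpha(\lambda^{-1}\xi)=\lambda^{-|\alpha|}(i\xi)^{\alpha}\phi(\lambda^{-1}\xi)$, one gets $\widehat{\partial^{\alpha}u}(\xi)=\lambda^{|\alpha|}h_\alpha(\lambda^{-1}\xi)\widehat{u}(\xi)$, hence $\partial^{\alpha}u=\lambda^{k+n}\,\big(\check{h}_\alpha(\lambda\,\cdot)\big)*u$ with $k=|\alpha|$. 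Applying \eqref{YoungI} with $\tfrac{1}{\overrightarrow{m}}=1+\tfrac{1}{\overrightarrow{q}}-\tfrac{1}{\overrightarrow{p}}$ (which lies in $[1,\infty]^{n}$ precisely because $\overrightarrow{p}\le\overrightarrow{q}$ and $\overrightarrow{p}\ge 1$) together with the scaling identity gives $\|\partial^{\alpha}u\|_{L^{\overrightarrow{q}}}\le\lambda^{k+n}\lambda^{-(n+\sum_i(1/q_i-1/p_i))}\|\check{h}_\alpha\|_{L^{\overrightarrow{m}}}\|u\|_{L^{\overrightarrow{p}}}$, and the $\lambda$-exponent collapses to $k+\sum_i(\tfrac{1}{p_i}-\tfrac{1}{q_i})$. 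Taking the maximum of $\|\check{h}_\alpha\|_{L^{\overrightarrow{m}}}<\infty$ over $|\alpha|=k$ furnishes the constant $C$.

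The upper bound in \eqref{bern4} is then immediate from \eqref{bern2} applied with $\overrightarrow{q}=\overrightarrow{p}$ and $B=\{|\xi|\le r_2\}\supset\mathcal{C}$, so that $\operatorname{supp}\widehat{u}\subset\lambda\mathcal{C}\subset\lambda B$ and the exponent reduces to $k$. The lower bound is the substantive point. For $k\ge 1$ I would pick $\psi\in C_c^\infty(\mathbb{R}^{n})$ supported in an annulus away from the origin with $\psi\equiv 1$ on $\mathcal{C}$, so $\widehat{u}(\xi)=\psi(\lambda^{-1}\xi)\widehat{u}(\xi)$, and invert the Laplacian on the support via the multinomial identity $|\xi|^{2k}=\sum_{|\alpha|=k}\tfrac{k!}{\alpha!}\xi^{2\alpha}$. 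Writing $\Theta_\alpha(\xi)=\xi^{\alpha}|\xi|^{-2k}\psi(\xi)$, which is smooth and compactly supported because $\psi$ vanishes near $0$, and tracking $\Theta_\alpha(\lambda^{-1}\xi)=\lambda^{k}\xi^{\alpha}|\xi|^{-2k}\psi(\lambda^{-1}\xi)$, one arrives at the exact reconstruction formula
\[
\lambda^{k}u=\sum_{|\alpha|=k}\frac{k!}{\alpha!}\,i^{-k}\,\big(\mathcal{F}^{-1}[\Theta_\alpha(\lambda^{-1}\cdot)]\big)*\partial^{\alpha}u=\sum_{|\alpha|=k}\frac{k!}{\alpha!}\,i^{-k}\,\lambda^{n}\big(\check{\Theta}_\alpha(\lambda\,\cdot)\big)*\partial^{\alpha}u .
\]
Using \eqref{YoungI} in the form $L^{1}*L^{\overrightarrow{p}}\hookrightarrow L^{\overrightarrow{p}}$ and the scaling $\|\lambda^{n}\check{\Theta}_\alpha(\lambda\,\cdot)\|_{L^{1}}=\|\check{\Theta}_\alpha\|_{L^{1}}<\infty$ then bounds $\lambda^{k}\|u\|_{L^{\overrightarrow{p}}}$ by $C\sum_{|\alpha|=k}\|\partial^{\alpha}u\|_{L^{\overrightarrow{p}}}\le C\sup_{|\alpha|=k}\|\partial^{\alpha}u\|_{L^{\overrightarrow{p}}}$, i.e. the claimed lower bound, the case $k=0$ being trivial with $C=1$.

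I expect the main obstacle to be this lower bound: one must produce the exact recovery of $\lambda^{k}u$ from $\{\partial^{\alpha}u\}_{|\alpha|=k}$ using spectral localization away from the origin, and verify that each multiplier $\Theta_\alpha$ is genuinely $C_c^\infty$ so that its inverse transform is $L^{1}$ with a $\lambda$-independent norm after rescaling. Everything else is careful bookkeeping of the dilation homogeneity of the anisotropic norm and the admissibility of the Young exponent $\overrightarrow{m}$.
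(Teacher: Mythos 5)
Your proof is correct and takes essentially the same route as the paper's: both estimates are reduced to Young's inequality \eqref{YoungI} applied to a rescaled smooth multiplier (a cutoff identically $1$ on the ball for \eqref{bern2}, and kernels $\mathcal{F}^{-1}\big[\xi^{\alpha}|\xi|^{-2k}\psi(\xi)\big]$ with $\psi$ supported away from the origin for the lower bound in \eqref{bern4}), together with the dilation identity for the mixed norm. One minor point in your favor: you correctly insert the multinomial coefficients via $|\xi|^{2k}=\sum_{|\alpha|=k}\frac{k!}{\alpha!}\xi^{2\alpha}$ in the reconstruction formula, a factor the paper's displayed identity actually omits.
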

\begin{proof}
(1)\,\,Let $\psi$ be a Schwartz function on $\mathbb{R}^{n}$ such that $\chi_{B}\leq \hat{\psi}\leq \chi_{2B}$. Since $\widehat{u}(\xi)=\hat{\psi}(\xi/\lambda) \widehat{u}(\xi)$ when $\operatorname{supp} \widehat{u} \subset \lambda B$, we have
$$
\partial^{\alpha} u=i^{|\alpha|}\mathcal{F}^{-1}(\xi^{\alpha}\hat{u})=
i^{|\alpha|}\mathcal{F}^{-1}(\xi^{\alpha}\hat{\psi}(\xi/\lambda) \widehat{u}(\xi))=
\lambda^{|\alpha|}(\partial^{\alpha} \psi)_{\lambda}\ast u.
$$
Here  $(\partial^{\alpha} \psi)_{\lambda}(x)=\lambda^{n}\,\partial^{\alpha} \psi(\lambda x)$ for all $x\in\mathbb{R}^{n}$.

Fix $|\alpha|=k$. Take $1/\overrightarrow{r}=1+1/\overrightarrow{q}-1/\overrightarrow{p}$, then the hypotheses on the indices imply that $1\leq\overrightarrow{r}\leq\infty$. In view of \eqref{YoungI}, we infer that
$$\ba
\|\partial^{\alpha}u \|_{L^{\overrightarrow{q}}(\mathbb{R}^{n})}=& \lambda^{k}\left\|(\partial^{\alpha} \psi)_{\lambda}\ast u\right\|_{L^{\overrightarrow{q}}(\mathbb{R}^{n})}\\
\leq & \lambda^{k}\left\|(\partial^{\alpha} \psi)_{\lambda}\right\|_{L^{\overrightarrow{r}}(\mathbb{R}^{n})}\|u\|_{L^{\overrightarrow{p}}(\mathbb{R}^{n})}\\
=& \lambda^{k+n-\sum_{i}\frac{1}{r_{i}}}\left\|\partial^{\alpha} \psi\right\|_{L^{\overrightarrow{r}}(\mathbb{R}^{n})}\|u\|_{L^{\overrightarrow{p}}(\mathbb{R}^{n})}\\
=& \lambda^{k+\sum_{i}\left(\frac{1}{p_{i}}-\frac{1}{q_{i}}\right)}\left\|\partial^{\alpha} \psi\right\|_{L^{\overrightarrow{r}}(\mathbb{R}^{n})}\|u\|_{L^{\overrightarrow{p}}(\mathbb{R}^{n})}.
\ea$$
This together with the fact that $\partial^{\alpha} \psi \in \mathcal{S}(\mathbb{R}^{n})\subset L^{\overrightarrow{r}}(\mathbb{R}^{n})$ yields \eqref{bern2}.

(2) \,\,Observe that \eqref{bern2} with $\overrightarrow{q}=\overrightarrow{p}$ implies the second inequality of \eqref{bern4}, it suffices to show the first inequality in \eqref{bern4}. Let $\eta$ be a Schwartz function on $\mathbb{R}^{n}$ such that $\chi_{\mathcal{C}}\leq \hat{\eta}\leq \chi_{\tilde{\mathcal{C}}}$, where $\tilde{\mathcal{C}}=\left\{\xi \in \mathbb{R}^{n}:  r_{1}/2 \leq|\xi| \leq 2 r_{2}\right\}$. It follows from $ \operatorname{supp} \hat{u} \subset\lambda \mathcal{C}$ that for all $\xi\in\mathbb{R}^{n}$,
$$
\hat{u}(\xi)=\sum_{|\alpha|=k} \frac{(-i \xi)^{\alpha}}{|\xi|^{2 k}} \hat{\eta}\left(\xi/\lambda\right)(i \xi)^{\alpha} \hat{u}(\xi)=\lambda^{-k} \sum_{|\alpha|=k} \frac{(-i \xi / \lambda)^{\alpha}}{|\xi/ \lambda|^{2 k}} \hat{\eta}\left(\xi/\lambda\right)\mathcal{F}(\partial^{\alpha}u)(\xi).  $$
Therefore, we may write
$$
u=\lambda^{-k} \sum_{|\alpha|=k} (g_{\alpha})_{\lambda} \ast \partial^{\alpha} u,
$$
where $(g_{\alpha})_{\lambda}(x)=\lambda^{n} g_{\alpha}(\lambda x)$ for all $x\in\mathbb{R}^{n}$, and
$$
 g_{\alpha}=\mathcal{F}^{-1}\left(\frac{(-i \xi)^{\alpha}}{|\xi|^{2 k}} \hat{\eta}(\xi)\right) \in \mathcal{S}(\mathbb{R}^{n})\subset L^{1}(\mathbb{R}^{n}).
$$
This together with Young's inequality \eqref{YoungI} yields that
$$
\begin{aligned}
\|u\|_{L^{\overrightarrow{p}}(\mathbb{R}^{n})} & \leq \lambda^{-k}\sum_{|\alpha|=k}\left\|(g_{\alpha})_{\lambda} \ast \partial^{\alpha} u\right\|_{L^{\overrightarrow{p}}(\mathbb{R}^{n})} \\
& \leq \lambda^{-k}\sum_{|\alpha|=k}\left\|(g_{\alpha})_{\lambda}\right\|_{L^{1}(\mathbb{R}^{n})}\left\|\partial^{\alpha} u\right\|_{L^{\overrightarrow{p}}(\mathbb{R}^{n})} \\
 &\leq \lambda^{-k}\left(\sum_{|\alpha|=k}\left\|g_{\alpha}\right\|_{L^{1}(\mathbb{R}^{n})}\right) \sup _{|\alpha|=k}\left\|\partial^{\alpha} u\right\|_{L^{\overrightarrow{p}}(\mathbb{R}^{n})}.
\end{aligned}
$$
This concludes \eqref{bern4} and the proof is completed.
\end{proof}
\subsection{Anisotropic Besov spaces and anisotropic Sobolev spaces}
  $\mathcal{S}$ represents the Schwartz class of rapidly decreasing functions, $\mathcal{S}'$ the
space of tempered distributions, $\mathcal{S}'/\mathcal{P}$ the quotient space of tempered distributions which modulo polynomials.
  We use $\mathcal{F}f$ or $\widehat{f}$ to denote the Fourier transform of a tempered distribution $f$.
To define anisotropic Besov space, we need the following dyadic unity partition
(see e.g. \cite{[BCD]}). Choose two nonnegative radial
functions $\varrho$, $\varphi\in C^{\infty}(\mathbb{R}^{n})$ be
supported respectively in the ball $\{\xi\in
\mathbb{R}^{n}:|\xi|\leq \frac{4}{3} \}$ and the shell $\{\xi\in
\mathbb{R}^{n}: \frac{3}{4}\leq |\xi|\leq
  \frac{8}{3} \}$ such that
\begin{equation*}
\varrho(\xi)+\sum_{j\geq 0}\varphi(2^{-j}\xi)=1, \quad
 \forall\xi\in\mathbb{R}^{n}; \qquad
 \sum_{j\in \mathbb{Z}}\varphi(2^{-j}\xi)=1, \quad \forall\xi\neq 0.
\end{equation*}
The nonhomogeneous dyadic
blocks $\Delta_{j}$ are defined by
$$\left\{\ba
  &  \Delta_{j} f:=0 ~~ \text{if} ~~ j \leq-2,\\
&\Delta_{-1} f:=\varrho(D) f=(\varrho(\xi)\hat{f}(\xi))^{\vee},\\
&\Delta_{j} f:=\varphi\left(2^{-j} D\right) f=2^{jn}\check{\varphi}(2^j\cdot)\ast f  ~~\text{if}~~ j \geq 0,\\
&S_{j}f:= \sum_{k\leq j-1}\Delta_{k}f ~~\text{for}~~j\in\mathbb{Z}.
\ea\right.$$
The homogeneous Littlewood-Paley operators are defined as follows
\begin{equation*}
  \forall j\in\mathbb{Z},\quad \dot{\Delta}_{j}f:= \varphi(2^{-j}D)f \quad\text{and}\quad \dot S_{j}f:= \sum_{k\leq j-1}\dot\Delta_{k}f.
\end{equation*}
The homogeneous anisotropic Besov space $ \dot{B}^{s}_{\overrightarrow{p},r}(\mathbb{R}^{n})$, with $s\in \mathbb{R}$, $r\in [1,\infty]$ and $\overrightarrow{p}\in [1,\infty]^{n},$ is the set of $f\in\mathcal{S}'(\mathbb{R}^{n})/\mathcal{P}(\mathbb{R}^{n})$ such that
\begin{equation*}
  \norm{f}_{\dot{B}^{s}_{\overrightarrow{p},r}(\mathbb{R}^{n})}:=\norm{\left\{2^{js}\norm{\dot{\Delta}
  _{j}f}_{L^{\overrightarrow{p}}(\mathbb{R}^{n})}\right\}}_{\ell^{r}(\mathbb{Z})}<\infty,
\end{equation*}
where $\ell^{r}(\mathbb{Z})$ represents the set of sequences with summable $r$-th powers.
The homogenous anisotropic Sobolev norm $\|\cdot\|_{\dot{H}^{s}_{\overrightarrow{p} }(\mathbb{R}^{n})}$  is defined as $\|f\|_{\dot{H}^{s}_{\overrightarrow{p}}(\mathbb{R}^{n})}=\|\Lambda^{s}f\|_{L^{\overrightarrow{p}}(\mathbb{R}^{n})}.$
When $p_{1}=\cdots=p_{n}=p$, the anisotropic Sobolev spaces reduce to the classical Sobolev spaces $\dot{H}^{s}_{p}(\mathbb{R}^{n})$.

\begin{lemma}\label{lem2.2}
Suppose that $f\in\dot{H}^{s}_{\overrightarrow{p}}(\mathbb{R}^{n})$ with $s\in \mathbb{R}$ and $\overrightarrow{p}\in [1,\infty]^{n}$. Then there holds
\be\label{keyinclue}
 \|f\|_{ \dot{B}^{s}_{\overrightarrow{p}, \infty }(\mathbb{R}^{n})}\leq C\|f\|_{\dot{H}^{s}_{\overrightarrow{p}}(\mathbb{R}^{n})},
\ee
where $C>0$ depends only on $n, s$ and $\varphi$.
\end{lemma}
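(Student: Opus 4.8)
The plan is to bound the $\dot B^{s}_{\overrightarrow{p},\infty}$ norm block by block, reducing each Littlewood--Paley piece to the single quantity $\|\Lambda^{s}f\|_{L^{\overrightarrow{p}}}=\|f\|_{\dot H^{s}_{\overrightarrow{p}}}$ by means of a Fourier multiplier that is harmless on anisotropic Lebesgue spaces. Since $\|f\|_{\dot B^{s}_{\overrightarrow{p},\infty}}=\sup_{j\in\mathbb{Z}}2^{js}\|\dot\Delta_{j}f\|_{L^{\overrightarrow{p}}}$, it suffices to prove the uniform-in-$j$ estimate $2^{js}\|\dot\Delta_{j}f\|_{L^{\overrightarrow{p}}}\leq C\|\Lambda^{s}f\|_{L^{\overrightarrow{p}}}$ and then take the supremum over $j$.

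First I would isolate the multiplier. Recall $\widehat{\dot\Delta_{j}f}(\xi)=\varphi(2^{-j}\xi)\hat f(\xi)$ and $\widehat{\Lambda^{s}f}(\xi)=|\xi|^{s}\hat f(\xi)$. Define $\tilde\varphi(\eta):=|\eta|^{-s}\varphi(\eta)$. Because $\varphi\in C^{\infty}$ is supported in the annulus $\{3/4\le|\eta|\le 8/3\}$, on which $|\eta|^{-s}=(|\eta|^{2})^{-s/2}$ is smooth and bounded above and below, the function $\tilde\varphi$ belongs to $C_{c}^{\infty}(\mathbb{R}^{n})$; hence $\tilde\varphi\in\mathcal{S}(\mathbb{R}^{n})$ and its inverse Fourier transform $\check{\tilde\varphi}\in\mathcal{S}(\mathbb{R}^{n})\subset L^{1}(\mathbb{R}^{n})$. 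The point of this definition is the elementary frequency identity
\be
2^{js}\varphi(2^{-j}\xi)=2^{js}\,|2^{-j}\xi|^{s}\,\tilde\varphi(2^{-j}\xi)=|\xi|^{s}\,\tilde\varphi(2^{-j}\xi),
\ee
valid for every $\xi\in\mathbb{R}^{n}$ and $j\in\mathbb{Z}$. Multiplying by $\hat f(\xi)$ gives $2^{js}\widehat{\dot\Delta_{j}f}(\xi)=\tilde\varphi(2^{-j}\xi)\,\widehat{\Lambda^{s}f}(\xi)$, so that on the physical side
\be
2^{js}\dot\Delta_{j}f=\tilde\varphi(2^{-j}D)\,\Lambda^{s}f=\big(2^{jn}\check{\tilde\varphi}(2^{j}\cdot)\big)\ast\Lambda^{s}f.
\ee

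Next I would estimate this convolution using Young's inequality in anisotropic Lebesgue spaces \eqref{YoungI} with $\overrightarrow{q}=\overrightarrow{p}$ and the $L^{1}$ kernel, obtaining
\be
2^{js}\|\dot\Delta_{j}f\|_{L^{\overrightarrow{p}}(\mathbb{R}^{n})}\leq \big\|2^{jn}\check{\tilde\varphi}(2^{j}\cdot)\big\|_{L^{1}(\mathbb{R}^{n})}\,\|\Lambda^{s}f\|_{L^{\overrightarrow{p}}(\mathbb{R}^{n})}=\|\check{\tilde\varphi}\|_{L^{1}(\mathbb{R}^{n})}\,\|\Lambda^{s}f\|_{L^{\overrightarrow{p}}(\mathbb{R}^{n})},
\ee
where the last equality is the scaling invariance of the $L^{1}$ norm of a dilated kernel. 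Since $C:=\|\check{\tilde\varphi}\|_{L^{1}}$ depends only on $n,s,\varphi$ and is independent of $j$, taking the supremum over $j\in\mathbb{Z}$ yields \eqref{keyinclue}. The only point requiring care is the justification that $\tilde\varphi$ is genuinely a smooth, compactly supported symbol: this hinges entirely on the fact that $\operatorname{supp}\varphi$ avoids the origin, which removes the singularity of $|\eta|^{-s}$ and guarantees $\check{\tilde\varphi}\in L^{1}$. No further obstacle arises, since the anisotropic Young inequality handles the mixed norm exactly as in the isotropic case.
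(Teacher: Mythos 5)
Your proof is correct and follows essentially the same route as the paper: your $\tilde\varphi=|\cdot|^{-s}\varphi$ is exactly the paper's kernel $\rho=\left(|\xi|^{-s}\varphi(\xi)\right)^{\vee}$, and both arguments conclude via Young's inequality \eqref{YoungI} with the dilated $L^{1}$ kernel, whose norm is scale-invariant and independent of $j$. The only cosmetic difference is that you verify the frequency identity $2^{js}\dot\Delta_{j}f=\tilde\varphi(2^{-j}D)\Lambda^{s}f$ explicitly, which the paper leaves implicit in its reduction to the uniform bound $\sup_{j}\|\rho_{j}\ast g\|_{L^{\overrightarrow{p}}}\leq C\|g\|_{L^{\overrightarrow{p}}}$.
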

\begin{proof}
It suffices to show that for all functions $g\in L^{\overrightarrow{p}}(\mathbb{R}^{n})$,
$$\sup _{j \in \mathbb{Z}}\| \rho_{j}\ast g  \|_{L^{\overrightarrow{p}}(\mathbb{R}^{n})}\leq C\|g\|_{L^{\overrightarrow{p}}(\mathbb{R}^{n})},$$
where $\rho=\left(|\xi|^{-s}\varphi(\xi)\right)^{\vee}$ and $\rho_{j}(x)=2^{jn}\rho(2^{j}x)$ for all $x\in\mathbb{R}^{n}$.

To this end, observe that $\rho$ is a Schwartz function on $\mathbb{R}^{n}$ and $\|\rho\|_{L^{1}(\mathbb{R}^{n})}<\infty.$ This together with Young's inequality \eqref{YoungI} implies that
\begin{align*}
\sup _{j \in \mathbb{Z}}\| \rho_{j}\ast g  \|_{L^{\overrightarrow{p}}(\mathbb{R}^{n})}\leq \sup _{j \in \mathbb{Z}} \|\rho_{j}\|_{L^{1}(\mathbb{R}^{n})}
\|g  \|_{L^{\overrightarrow{p}}(\mathbb{R}^{n})}=\|\rho\|_{L^{1}(\mathbb{R}^{n})}\|g \|_{L^{\overrightarrow{p}}(\mathbb{R}^{n})},
\end{align*}
which concludes the proof.
\end{proof}

We end this section with an anisotropic version of approximate identity lemma, which will be used in the proof of Theorem \ref{the1.3}.
\begin{lemma}\label{lem2.3}
Assume that $f(x,t)\in L^{p} (0, T ; L^{\overrightarrow{q}} (\mathbb{R}^{n} ) )$ with $p\in [1,\infty)$ and $\overrightarrow{q}\in [1,\infty)^{n}$. Let $S_{j}f(x,t)=\varrho(2^{-j}D) f(x,t)=\mathcal{F}^{-1}_{\xi\rightarrow x}(\varrho(2^{-j}\xi)\,\mathcal{F}_{y\rightarrow\xi}(f(y,t)))$.
Then there holds
$$\big\|S_{j}f -f\big\|_{L^{p} (0, T ; L^{\overrightarrow{q}} (\mathbb{R}^{n} ))}\rightarrow0,~\text{as}~j\rightarrow\infty.
$$
\end{lemma}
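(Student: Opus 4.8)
The plan is to recognize $S_j f(\cdot,t)$ as a spatial convolution against a dilated approximate identity and then combine an approximate-identity argument in the anisotropic space $L^{\overrightarrow{q}}(\mathbb{R}^{n})$ with the dominated convergence theorem in the time variable. Writing $\check{\varrho}=\mathcal{F}^{-1}\varrho$ and $\check{\varrho}_{j}(x)=2^{jn}\check{\varrho}(2^{j}x)$, one has $S_{j}f(\cdot,t)=\check{\varrho}_{j}\ast f(\cdot,t)$, where the convolution is taken in $x$ only, with $t$ frozen. Since $\varphi$ is supported in the shell $\{\tfrac34\leq|\xi|\leq\tfrac83\}$, the partition identity $\varrho(\xi)+\sum_{j\geq0}\varphi(2^{-j}\xi)=1$ forces $\varrho(0)=1$, hence $\int_{\mathbb{R}^{n}}\check{\varrho}\,dx=\varrho(0)=1$ and $\int_{\mathbb{R}^{n}}\check{\varrho}_{j}\,dx=1$ for every $j$. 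Thus $\{\check{\varrho}_{j}\}$ is a genuine approximate identity, with an $L^{1}$ kernel that need not be nonnegative.

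First I would fix a $t$ for which $f(\cdot,t)\in L^{\overrightarrow{q}}(\mathbb{R}^{n})$, that is, almost every $t$, and show $\|S_{j}f(\cdot,t)-f(\cdot,t)\|_{L^{\overrightarrow{q}}}\to0$. Using $\int\check{\varrho}_{j}\,dy=1$ I write
$$S_{j}f(x,t)-f(x,t)=\int_{\mathbb{R}^{n}}\check{\varrho}_{j}(y)\big(f(x-y,t)-f(x,t)\big)\,dy,$$
apply Minkowski's integral inequality in the mixed norm $L^{\overrightarrow{q}}$ (valid since every $q_{i}\geq1$), and change variables $y=2^{-j}z$ to obtain
$$\big\|S_{j}f(\cdot,t)-f(\cdot,t)\big\|_{L^{\overrightarrow{q}}}\leq\int_{\mathbb{R}^{n}}|\check{\varrho}(z)|\,\big\|f(\cdot-2^{-j}z,t)-f(\cdot,t)\big\|_{L^{\overrightarrow{q}}}\,dz.$$
The integrand is dominated by $2\|f(\cdot,t)\|_{L^{\overrightarrow{q}}}\,|\check{\varrho}(z)|\in L^{1}_{z}$, and for each fixed $z$ the shift $2^{-j}z\to0$, so the inner norm tends to $0$ by continuity of translations in $L^{\overrightarrow{q}}(\mathbb{R}^{n})$. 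Dominated convergence in $z$ then yields the claimed spatial convergence, while the uniform bound $\|S_{j}f(\cdot,t)\|_{L^{\overrightarrow{q}}}\leq\|\check{\varrho}\|_{L^{1}}\|f(\cdot,t)\|_{L^{\overrightarrow{q}}}$ comes for free from Young's inequality \eqref{YoungI} applied with the $L^{1}$ kernel $\check{\varrho}_{j}$.

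Finally I would upgrade this pointwise-in-$t$ statement to the Bochner norm. The uniform bound gives $\|S_{j}f(\cdot,t)-f(\cdot,t)\|_{L^{\overrightarrow{q}}}\leq(\|\check{\varrho}\|_{L^{1}}+1)\|f(\cdot,t)\|_{L^{\overrightarrow{q}}}$, whose $p$-th power is dominated by the fixed $L^{1}(0,T)$ function $t\mapsto(\|\check{\varrho}\|_{L^{1}}+1)^{p}\|f(\cdot,t)\|_{L^{\overrightarrow{q}}}^{p}$ because $f\in L^{p}(0,T;L^{\overrightarrow{q}})$. Since $\|S_{j}f(\cdot,t)-f(\cdot,t)\|_{L^{\overrightarrow{q}}}^{p}\to0$ for a.e.\ $t$, the dominated convergence theorem in $t$ gives $\int_{0}^{T}\|S_{j}f(\cdot,t)-f(\cdot,t)\|_{L^{\overrightarrow{q}}}^{p}\,dt\to0$, which is exactly the assertion. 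The main obstacle is the spatial approximate-identity step: one must justify that translation is continuous in the anisotropic space $L^{\overrightarrow{q}}$ and that Minkowski's integral inequality holds there. Both follow from the iterated structure of the mixed norm together with the finiteness of every exponent $q_{i}<\infty$ (which also ensures density of $C_{c}^{\infty}$ and that each coordinate translation is an isometry), but they are precisely the places where the anisotropy, as opposed to a single Lebesgue exponent, must be handled with care.
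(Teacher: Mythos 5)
Your proposal is correct and follows essentially the same route as the paper's proof: the same reduction of $S_jf-f$ to a convolution against the dilated kernel $\check{\varrho}_j$ with $\int\check{\varrho}=\varrho(0)=1$, the generalized Minkowski inequality in $L^{\overrightarrow{q}}$, strong continuity of translations in anisotropic Lebesgue spaces with finite exponents, and two applications of dominated convergence (first in the convolution variable, then in $t$). The two ingredients you flag as needing care are exactly the ones the paper supplies by citing Besov--Il'in--Nikolski\v{i} (their Theorems 2.10 and 1.5).
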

\begin{proof}
Note that $\int_{\mathbb{R}^{n}}\check{\varrho}=\varrho(0)=1$, we have
\begin{align*}
S_{j}f(x,t) -f(x,t)=&\int_{\mathbb{R}^{n}} \left(f(x-y,t)-f(x,t)\right)\check{\varrho}_{j}(y)dy\\
=&\int_{\mathbb{R}^{n}} \left(f(x-2^{-j}y,t)-f(x,t)\right)\check{\varrho}(y)dy
\end{align*}
for any $x\in\mathbb{R}^{n}$ and $t\in (0, T)$, where $\check{\varrho}_{j}(y)=2^{jn}\check{\varrho}(2^{j}y)$ for all $y\in\mathbb{R}^{n}$. Then the generalized Minkowski inequality in \cite[Theorem 2.10]{[BIN]} enables us to arrive at
\begin{align*}
\big\|S_{j}f(\cdot,t) -f(\cdot,t)\big\|_{L^{\overrightarrow{q}} (\mathbb{R}^{n} )}\leq \int_{\mathbb{R}^{n}} \big\|f(\cdot-2^{-j}y,t)-f(\cdot,t)\big\|_{L^{\overrightarrow{q}} (\mathbb{R}^{n} )} |\check{\varrho}(y)|dy.
\end{align*}
Since the translation operator is strongly continuous on $L^{\overrightarrow{q}} (\mathbb{R}^{n} )$ with $\overrightarrow{q}\in [1,\infty)^{n}$ (see \cite[Theorem 1.5]{[BIN]}), there holds for all $y\in\mathbb{R}^{n}$ and almost every $t\in (0, T)$,
$$\lim_{j\rightarrow\infty}\big\|f(\cdot-2^{-j}y,t)-f(\cdot,t)\big\|_{L^{\overrightarrow{q}} (\mathbb{R}^{n} )}=0.$$
Combining this with the fact that $\big\|f(\cdot-2^{-j}y,t)-f(\cdot,t)\big\|_{L^{\overrightarrow{q}} (\mathbb{R}^{n} )}\leq 2\big\|f(\cdot,t)\big\|_{L^{\overrightarrow{q}} (\mathbb{R}^{n} )}$ and $\check{\varrho}\in \mathcal{S}(\mathbb{R}^{n})\subset L^{1}(\mathbb{R}^{n})$, we may derive from Lebesgue's Dominated Convergence Theorem that for almost every $t\in (0, T)$,
$$\limsup_{j\rightarrow\infty}\big\|S_{j}f(\cdot,t) -f(\cdot,t)\big\|_{L^{\overrightarrow{q}} (\mathbb{R}^{n} )}\leq \lim_{j\rightarrow\infty}\int_{\mathbb{R}^{n}} \big\|f(\cdot-2^{-j}y,t)-f(\cdot,t)\big\|_{L^{\overrightarrow{q}} (\mathbb{R}^{n} )} |\check{\varrho}(y)|dy=0.$$
This together with the observation that
$$\big\|S_{j}f(\cdot,t) -f(\cdot,t)\big\|_{L^{\overrightarrow{q}} (\mathbb{R}^{n} )}\leq 2\big\|f(\cdot,t)\big\|_{L^{\overrightarrow{q}} (\mathbb{R}^{n} )}\int_{\mathbb{R}^{n}} |\check{\varrho}(y)|dy \in L^{p}(0, T)$$
yields that
$$  \lim_{j\rightarrow\infty}\int_{0}^{T}\big\|S_{j}f(\cdot,t) -f(\cdot,t)\big\|^{p}_{L^{\overrightarrow{q}} (\mathbb{R}^{n} )}dt=0.$$
Here we have used the Lebesgue's Dominated Convergence Theorem once again, which concludes the proof of this lemma.
\end{proof}
\section{Gagliardo-Nirenberg inequality in mixed Lebesgue spaces }
This section is    devoted to    the proof of Theorem \ref{the1.1}, which establishes Gagliardo-Nirenberg inequality \eqref{glxied1} in anisotropic Lebesgue spaces.
\begin{proof}[Proof of Theorem \ref{the1.1}]
Since the estimate \eqref{glxied1} with $n=1$ is the well-known fractional Gagliardo-Nirenberg inequality in isotropic Lebesgue spaces on $\mathbb{R}$, we only need to consider the case that $n\geq 2.$

By applying the celebrated Marcinkiewicz-Lizorkin multiplier theorem in \cite{[Lizork]} to the Fourier multiplier operator on $\mathbb{R}^{n}$ with each of the following three symbols
$$
m_{1}(\xi)=\frac{|\xi_{i}|^{s}}{\sum\limits_{i=1}^{n}|\xi_{i}|^{s}}~~\text{with}~~1\leq i\leq n,
~~m_{2}(\xi)=\frac{\sum\limits_{i=1}^{n}|\xi_{i}|^{s}}{|\xi|^{s}},
~~m_{3}(\xi)=\frac{1}{m_{2}(\xi)},
$$
we arrive at
$$
\|\Lambda^{s}u\|_{L^{\overrightarrow{p} } (\mathbb{R}^{n})}\approx \|\sum\limits_{i=1}^{n}\Lambda_{i}^{s}u\|_{L^{\overrightarrow{p} } (\mathbb{R}^{n})}
\approx \sum\limits_{i=1}^{n}\|\Lambda_{i}^{s}u\|_{L^{\overrightarrow{p} } (\mathbb{R}^{n})}
$$
for any $s\geq 0$ and $1<\overrightarrow{p}<\infty$. Here $\Lambda=(-\Delta)^{1/2}$ and $\Lambda_{i}=(-\partial_{i}^{2})^{1/2}$ for $1\leq i\leq n$.\\
Hence, the desired inequality \eqref{glxied1} is equivalent to
$$
\sum\limits_{i=1}^{n}\|\Lambda_{i}^{\sigma}u\|_{L^{\overrightarrow{p} } (\mathbb{R}^{n})} \leq C\|u\|_{L^{\overrightarrow{q}}(\mathbb{R}^{n}) }^{\theta}\left(\sum\limits_{i=1}^{n}\|\Lambda_{i}^{s}u\|_{L^{\overrightarrow{r} } (\mathbb{R}^{n})}\right)^{1-\theta}.$$
Furthermore, this estimate together with the standard rescaling analysis implies the following equivalent inequality that for $k=1,2,\ldots,n$,
\be\label{scal}
\|\Lambda_{k}^{\sigma}u\|_{L^{\overrightarrow{p} } (\mathbb{R}^{n})} \leq C\|u\|_{L^{\overrightarrow{q}}(\mathbb{R}^{n}) }^{\theta}\left(\lambda_{k}^{-\sigma}\prod\limits_{j=1}^{n}\lambda_{j}^{\tau_{j}}\right)\left(\sum\limits_{i=1}^{n}\lambda_{i}\|\Lambda_{i}^{s}u\|^{\frac{1}{s}}_{L^{\overrightarrow{r} } (\mathbb{R}^{n})}\right)^{s(1-\theta)},
\ee
where $\tau_{j}=\frac{1}{p_{j}}-\frac{\theta}{q_{j}}-\frac{1-\theta}{r_{j}}$ for $1\leq j\leq n$. For $n\geq2$, this yields that $\sigma-s(1-\theta)\leq\tau_{j}\leq 0$ with $1\leq j\leq n$, otherwise we can derive a contradiction for any nonzero function $u$ by letting some positive $\lambda_{i}$ in \eqref{scal} tend to zero or infinity appropriately. Also, by taking $\lambda_{1}=\lambda_{2}=\cdots=\lambda_{n}>0,$ we infer that
$$
\sum\limits_{j=1}^{n}(-\tau_{j})=s(1-\theta)-\sigma\geq 0.
$$\
This means the neccessity of condition \eqref{condi} for anisotropic inequality \eqref{glxied1} in Theorem \ref{the1.1}.

In what follows, we shall give a proof of the Gagliardo-Nirenberg inequality \eqref{glxied1} under the condition \eqref{condi}.

  Since the case $\theta=1$ implies that $\sigma=0$ and $\overrightarrow{q}=\overrightarrow{p}$, it suffices to prove \eqref{glxied1} for the case $\theta<1$. The proof can be divided into two steps.

Firstly, we shall show
$$
\|\Lambda^{\sigma}u\|_{L^{\overrightarrow{p} } (\mathbb{R}^{n})} \leq C \left\|\Lambda^{s(1-\theta)}u\right\|_{L^{\overrightarrow{\beta} } (\mathbb{R}^{n})},
$$
or equivalently,
$$
\B\|(\prod\limits_{j=1}^{n}\Lambda^{\tau_{j}}) f\B\|_{L^{\overrightarrow{p} } (\mathbb{R}^{n})}=\B\|\Lambda^{\sum\limits_{j=1}^{n}\tau_{j}} f \B\|_{L^{\overrightarrow{p} } (\mathbb{R}^{n})}=\left\|\Lambda^{\sigma-s(1-\theta)} f\right\|_{L^{\overrightarrow{p} } (\mathbb{R}^{n})} \leq C \|f\|_{L^{\overrightarrow{\beta} } (\mathbb{R}^{n})},
$$
where
$$
\overrightarrow{\tau}=\frac{1}{\overrightarrow{p}}-\frac{1}{\overrightarrow{\beta}},\quad 1<\overrightarrow{\beta}\leq \overrightarrow{p}<\infty.
$$
Note that the celebrated Marcinkiewicz-Lizorkin multiplier theorem in \cite{[Lizork]} applied to the Fourier multiplier operator on $\mathbb{R}^{n}$ with the symbol $\frac{\prod\limits_{j=1}^{n}|\xi|^{\tau_{j}}}{\prod\limits_{j=1}^{n}|\xi_{j}|^{\tau_{j}}}$ yields the estimate
$$
\B\|(\prod\limits_{j=1}^{n}\Lambda^{\tau_{j}}) f\B\|_{L^{\overrightarrow{p} } (\mathbb{R}^{n})}\leq C\B\|(\prod\limits_{j=1}^{n}\Lambda_{j}^{\tau_{j}}) f\B\|_{L^{\overrightarrow{p} } (\mathbb{R}^{n})},
$$
we only need to prove
$$
\B\|(\prod\limits_{j=1}^{n}\Lambda_{j}^{\tau_{j}}) f\B\|_{L^{\overrightarrow{p} } (\mathbb{R}^{n})}\leq C\|f\|_{L^{\overrightarrow{\beta} } (\mathbb{R}^{n})}.
$$
Indeed, the classical Hardy-Littlewood-Sobolev inequality on fractional integration and the H\"{o}lder inequality enable us to deduce that
\begin{align*}
&\B|\int_{\mathbb{R}^{n}} g(y_{1},\ldots,y_{n})\cdot(\prod\limits_{j=1}^{n}\Lambda_{j}^{\tau_{j}})f(y_{1},\ldots,y_{n})\,dy_{1}\cdots dy_{n}\B|\\
=&\B|\int_{\mathbb{R}^{2n}} f(x_{1},\ldots,x_{n})g(y_{1},\ldots,y_{n})\\
&\times\prod\limits_{j=1}^{n}
\B\{|x_{j}-y_{j}|^{sgn(\tau_{j})(\tau_{j}+1)}\delta(x_{j}-y_{j})^{sgn(\tau_{j})+1}B\} dx_{1}\cdots dx_{n}dy_{1}\cdots dy_{n}\B|\\
\leq & \int_{\mathbb{R}^{2n-2}}\prod\limits_{j=2}^{n}\left\{|x_{j}-y_{j}|^{sgn(\tau_{j})(\tau_{j}+1)}\delta(x_{j}-y_{j})^{sgn(\tau_{j})+1}\right\}dx_{2}\cdots dx_{n}dy_{2}\cdots dy_{n} \\
&\times\int_{\mathbb{R}^{2}}|f(x_{1},\ldots,x_{n})|\cdot|g(y_{1},\ldots,y_{n})|\cdot|x_{1}-y_{1}|^{sgn(\tau_{1})(\tau_{1}+1)}\delta(x_{1}-y_{1})^{sgn(\tau_{1})+1}dx_{1}dy_{1}\\
\leq & C\int_{\mathbb{R}^{2n-2}}\|f(\,\cdot,x_{2},\ldots,x_{n})\|_{L_{1}^{\beta_{1}}(\mathbb{R})}\|g(\,\cdot,y_{2},\ldots,y_{n})\|_{L_{1}^{p_{1}'}(\mathbb{R})}\\
& \times\prod\limits_{j=2}^{n}\left\{|x_{j}-y_{j}|^{sgn(\tau_{j})(\tau_{j}+1)}\delta(x_{j}-y_{j})^{sgn(\tau_{j})+1}\right\}dx_{2}\cdots dx_{n}dy_{2}\cdots dy_{n},
\end{align*}
where $\frac{1}{\overrightarrow{p}'}=1-\frac{1}{\overrightarrow{p}}\,$ and $\delta$ represents the Dirac delta function at the origin. By induction, it can easily be seen that
$$
\left|\int_{\mathbb{R}^{n}} g(y)\cdot(\prod\limits_{j=1}^{n}\Lambda_{j}^{\tau_{j}})f(y)dy\right|\leq C\|f\|_{L^{\overrightarrow{\beta}}(\mathbb{R}^{n})} \|g\|_{L^{\overrightarrow{p}'}(\mathbb{R}^{n})},
$$
which verifies the desired estimate.

Next, thanks to the decomposition of low and high frequencies, one can derive the following pointwise estimate
\be\label{bcd}
 \left|\Lambda^{s(1-\theta)} u(x)\right| \leq C(\mathcal{M} u(x))^{\theta}\left(\mathcal{M} \Lambda^{s} u(x)\right) ^{1-\theta},
\ee
whose proof can be found in \cite[p.84]{[BCD]}. As a consequence, it follows from the H\"older inequality that
$$\ba
 \left\|\Lambda^{s(1-\theta)}u\right\|_{L^{\overrightarrow{\beta} } (\mathbb{R}^{n})}&\leq C \|(\mathcal{M} u )^{\theta}\left(\mathcal{M} \Lambda^{s} u \right) ^{1-\theta}\|_{L^{\overrightarrow{\beta} } (\mathbb{R}^{n})}\\& \leq C \|\mathcal{M} u \|^{\theta}_{L^{\overrightarrow{q} }(\mathbb{R}^{n})}\|\mathcal{M}\left(  \Lambda^{s } u\right)\|^{1-\theta}_{L^{\overrightarrow{r}}(\mathbb{R}^{n})},
 \ea$$
where $\frac{1}{\overrightarrow{\beta}}=\frac{\theta}{\overrightarrow{q}}+\frac{1-\theta}{\overrightarrow{r}}$. According to the boundedness of Hardy-Littlewood maximal operator in mixed Lebesgue spaces (see \cite[Theorem 6.9]{[Sawano]}), we conclude the desired inequality \eqref{glxied1}.
The proof of Theorem \ref{the1.1} is completed.
\end{proof}

\section{Gagliardo-Nirenberg inequality in mixed Besov spaces}

The goal of this section is to prove Theorem   \ref{the1.2}  by following  the path of \cite{[Chikami],[WWY]}  via
the  generalized Bernstein inequalities in  Lemma \ref{lem2.1}.
 The proof  in \cite{[Chikami],[WWY]}   relies on the
relationship between $ \sum1/q_{i}  $ and $\sum1/r_{i}$.
Here, the different starting point in our proof of Theorem   \ref{the1.2}  is  based on the relationship between $ \sum1/q_{i}+\sigma $ and $\sum1/p_{i}$. Indeed, we observe that there holds
 $$(1-\theta)( {s-\sigma} +\sum\f{1}{p_{i}}-\sum\f{ 1}{r_{i}})=\theta(\sum\f1q_{i}+ \sigma  -\sum\f1p_{i}),$$
which enables us to divide the proof of Theorem \ref{the1.2} into three main cases.
  \begin{proof}[Proof of Theorem \ref{the1.2}]
{\bf Case 1:}  Firstly, we consider the case that $\sum\f1q_{i}+ {\sigma} >\sum\f1p_{i}$ , that is,  ${s-\sigma} +\sum\f{1}{p_{i}}-\sum\f{ 1}{r_{i}}>0$.
Our proof will be divided into three subcases by the relationship
between $\sum\f1r_{i}$ and $\sum\f1p_{i}$.

($I_{1}$) $\sum\f{1}{r_{i}}<\sum\f{1}{p_{i}}$:

($I_{11}$) $\sum\f{1}{r_{i}}<\sum\f{1}{p_{i}}=\sum\f{1}{q_{i}}$:

 Note that
\be\ba\label{mixed3.1}
\sum\f{1}{p_{i}}&=(1-\f{\sigma}{s})\sum\f{1}{q_{i}}+\sum\f{\sigma}{s}\f{1}{r_{i}}+
(1-\f{\sigma}{s})(\sum\f{1}{r_{i}}-\sum\f{1}{q_{i}})+(\sigma-s(1-\theta))\\
&< (1-\f{\sigma}{s})\sum\f{1}{q_{i}}+\f{\sigma}{s}\sum\f{1}{r_{i}}\\
&< (1-\f{\sigma}{s})\sum\f{1}{q_{i}}+\f{\sigma}{s}\sum\f{1}{q_{i}}
\\&=\sum\f{1}{q_{i}}.
\ea\ee
Hence, this subcase will not happen.

($I_{12}$) $ \sum\f1r_{i}<\sum\f1p_{i}< \sum\f1q_{i}$:

Owing to \eqref{addc} and \eqref{interi}, there holds
$$
\|\dot{\Delta}_{j}u\|_{L^{\overrightarrow{p}}(\mathbb{R}^{n})}\leq C
\|\dot{\Delta}_{j}u\|^{\alpha}_{L^{\overrightarrow{q} }(\mathbb{R}^{n})}
\|\dot{\Delta}_{j}u\|^{1-\alpha}_{L^{\overrightarrow{r} }(\mathbb{R}^{n})},
~~\f{1}{p_{i}}=\f{\alpha}{q_{i}}+\f{1-\alpha}{r_{i}}.
$$
Since $\sum\f1r_{i}<\sum\f1q_{i}$, we derive from \eqref{mixed3.1} that $\sum\f{1}{p_{i}} < (1-\f{\sigma}{s})\sum\f{1}{q_{i}}+\f{\sigma}{s}\sum\f{1}{r_{i}}$, which implies that
$s(1-\alpha)-\sigma>0$.\\
An application of Bernstein inequalities in Lemma \ref{lem2.1} yields that
\be\label{3.17}\ba
\|u\|_{\dot{B}^{\sigma}_{\overrightarrow{p},1 }}&=\sum_{j\leq k}2^{j\sigma}\|\dot{\Delta}_{j}u\|_{L^{\overrightarrow{p} }(\mathbb{R}^{n})}+\sum_{j> k}2^{j\sigma}\|\dot{\Delta}_{j}u\|_{L^{\overrightarrow{p} }(\mathbb{R}^{n})}\\
&\leq\sum_{j\leq k}2^{j\sigma}\|\dot{\Delta}_{j}u\|_{L^{\overrightarrow{p} }(\mathbb{R}^{n})}+\sum_{j> k}2^{j\sigma}\|\dot{\Delta}_{j}u\|^{\alpha}_{L^{\overrightarrow{q} }(\mathbb{R}^{n})}
\|\dot{\Delta}_{j}u\|^{1-\alpha}_{L^{\overrightarrow{r} }(\mathbb{R}^{n})}\\
&\leq C\f{2^{k[\sigma+\sum(\f1q_{i}-\f1p_{i})]}}{2^{ [\sigma+\sum(\f1r_{i}-\f1p_{i})]}-1} \| u\|_{\dot{B}^{0}_{\overrightarrow{q},\infty}}+C \f{ 2^{-k[s(1-\alpha)-\sigma ]}}{1-2^{- [s(1-\alpha)-\sigma ]}} \| u\|_{\dot{B}^{0}_{\overrightarrow{q},\infty}}^{\alpha} \| u\|_{\dot{B}^{s}_{\overrightarrow{r}, \infty}}^{1-\alpha},
\ea\ee
where we have used the fact that $\sigma+\sum(\f1q_{i}-\f1p_{i})>0$ and $s(1-\alpha)-\sigma>0$.\\
By choosing $k\in \mathbb{Z}$ such that
$$\f{2^{k[\sigma+\sum(\f1q_{i}-\f1p_{i})]}}{2^{ [\sigma+\sum(\f1r_{i}-\f1p_{i})]}-1} \| u\|_{\dot{B}^{0}_{\overrightarrow{q},\infty}}\approx\f{ 2^{-k[s(1-\alpha)-\sigma ]}}{1-2^{- [s(1-\alpha)-\sigma ]}}  \| u\|_{\dot{B}^{0}_{\overrightarrow{q},\infty}}^{\alpha} \| u\|_{\dot{B}^{s}_{\overrightarrow{r}, \infty}}^{1-\alpha},$$
we conclude that
$$\ba
\|u\|_{\dot{B}^{\sigma}_{\overrightarrow{p}, 1}} &\leq C  \| u\|_{\dot{B}^{0}_{\overrightarrow{r},\infty}} ^{\theta} \| u\|_{\dot{B}^{s}_{\overrightarrow{r}, \infty}}^{1-\theta}.
\ea$$

($I_{13}$) $\sum\f{1}{r_{i}}<\sum\f{1}{q_{i}}<\sum\f{1}{p_{i}}$:

We observe that $\sum\f{1}{r_{i}}<\sum\f{1}{p_{i}}$ and \eqref{mixed3.1}
mean that $\sum\f{1}{p_{i}}<\sum\f{1}{q_{i}}$. Therefore, we need not consider the case that $\sum\f{1}{r_{i}}<\sum\f{1}{q_{i}}<\sum\f{1}{p_{i}}$  here.

($I_{14}$) $\sum\f{1}{r_{i}}=\sum\f{1}{q_{i}}<\sum\f{1}{p_{i}}$:

 An easy computation yields that
\be\label{mixed3.3}
\sum \f{1}{p_{i}}=\sum\f{ \theta}{q_{i}}+\sum \f{1-\theta}{r_{i}}- [(1-\theta)s-\sigma]
<\sum\f{1}{q_{i}}+(1-\theta)\sum \f{1}{r_{i}}=\sum \f{1}{q_{i}}.
\ee
It turns out that $\sum\f1p_{i}<\sum\f1q_{i}=\sum\f1r_{i}$. This is a contradiction.

($I_{15}$) $\sum\f{1}{q_{i}}<\sum\f{1}{r_{i}}<\sum\f{1}{p_{i}}$:

We assert that $\sum\f{1}{q_{i}}<\sum\f{1}{r_{i}}$ and  $\sum\f{1}{q_{i}} <\sum\f{1}{p_{i}}$ yield a contradiction.
Note that
\be\label{wwkey}
\B(\f{1-\f{\sigma}{s}-\theta}{1-\f{\sigma}{s}}\B)\B[\sum\f1p_{i}-(\sum\f1r_{i}-( {s-\sigma} ))\B]=
\B(\f{ \theta}{1-\f{\sigma}{s}}\B)\B[ (1-\f{\sigma}{s})
\sum\f1q_{i}+\sum\f{ \sigma}{sr_{i}}-\sum\f1p_{i}\B].
\ee
When $(1-\f{\sigma}{s})
\sum\f1q_{i}+\sum\f{ \sigma}{sr_{i}}=\sum\f1p_{i}$, we see that $\sum\f1q_{i}=\sum\f1r_{i}-s$. This contradicts the hypothesis that $s\neq \sum(\f1r_{i}-\f1q_{i})$.
As a conseuqence, we only need to consider the subcase that $(1-\f{\sigma}{s})
\sum\f1q_{i}+\sum\f{ \sigma}{sr_{i}}<\sum\f1p_{i}$ or $(1-\f{\sigma}{s})
\sum\f1q_{i}+\sum\f{ \sigma}{sr_{i}}>\sum\f1p_{i}$.
On one hand, we derive from  $\sum\f{1}{q_{i}}<\sum\f{1}{r_{i}}$ and $\sum\f1p_{i}<(1-\f{\sigma}{s})
\sum\f1q_{i}+\sum\f{ \sigma}{sr_{i}} $ that $\sum\f1p_{i}<\sum\f{1}{r_{i}}$, which leads to a
contradiction. On the other hand,  if there hold
  $\sum\f{1}{q_{i}}>\sum\f{1}{r_{i}}$ and $\sum\f1p_{i}<(1-\f{\sigma}{s})
\sum\f1q_{i}+\sum\f{ \sigma}{sr_{i}} $, then it follows from \eqref{wwkey} that
$\sum\f1p_{i}-(\sum\f1r_{i}-( {s-\sigma} ))<0$, namely,
$0<{s-\sigma}<\sum\f1r_{i}-\sum\f1p_{i}<0$.  We get a contradiction once again.

($I_{2}$) $\sum\f{1}{r_{i}}=\sum\f{1}{p_{i}}$:

($I_{21}$)$\sum\f{1}{r_{i}}=\sum\f{1}{p_{i}}<\sum\f{1}{q_{i}}$:

By means of \eqref{addc}, we
observe that
\be\label{5191}
\|\dot{\Delta}_{j}u\|_{L^{\overrightarrow{ p}  }(\mathbb{R}^{n})}\leq C
\|\dot{\Delta}_{j}u\|^{1-\alpha}_{L^{\overrightarrow{q} }(\mathbb{R}^{n})}
\|\dot{\Delta}_{j}u\|^{\alpha}_{L^{(1+\varepsilon) \overrightarrow{ p},\infty}(\mathbb{R}^{n})},
~~\f{1}{p_{i}}=\f{1-\alpha}{q_{i}}+\f{\alpha}{(1+\varepsilon)p_{i}},
\ee
where $\varepsilon>0$ will be determined later.
We derive from  this,  Bernstein inequalities in mixed Lebesgue spaces and ${s-\sigma} +\sum\f{1}{p_{i}}-\sum\f{ 1}{r_{i}}>0$ that
\be\label{3.171}\ba
\|u\|_{\dot{B}^{\sigma}_{\overrightarrow{p}, 1}}&=\sum_{j\leq k}2^{j\sigma}\|\dot{\Delta}_{j}u\|_{L^{\overrightarrow{p} }(\mathbb{R}^{n})}+\sum_{j> k}2^{j\sigma}\|\dot{\Delta}_{j}u\|_{L^{\overrightarrow{p} }(\mathbb{R}^{n})}\\
&\leq C\sum_{j\leq k}2^{j[\sigma+\sum(\f1q_{i}-\f1p_{i})]}\|\dot{\Delta}_{j}u\|_{L^{q,\infty}(\mathbb{R}^{n})}+C\sum_{j> k}2^{j\sigma}\|\dot{\Delta}_{j}u\|^{1-\alpha}_{L^{\overrightarrow{q} }(\mathbb{R}^{n})}
\|\dot{\Delta}_{j}u\|^{\alpha}_{L^{(1+\varepsilon)\overrightarrow{p} }(\mathbb{R}^{n})}\\
&\leq C\f{2^{k[\sigma+\sum(\f1q_{i}-\f1p_{i})]}}{2^{ [\sigma+n(\f1r-\f1p)]}-1} \| u\|_{\dot{B}^{0}_{\overrightarrow{q},\infty }}+ C\sum_{j> k}2^{-j[s\alpha-\sum \f{ \varepsilon\alpha}{(1+\varepsilon)p_{i}}-\sigma]} \| u\|_{\dot{B}^{0}_{\overrightarrow{q} \infty}}^{\alpha} \| u\|_{\dot{B}^{s}_{\overrightarrow{r}, \infty}}^{1-\alpha},
\ea\ee
where we have used the fact that $\sigma+\sum(\f1q_{i}-\f1p_{i})>0$. \\
 Denote $\delta(\varepsilon)=s\alpha-\sum \f{ \varepsilon\alpha}{(1+\varepsilon)p_{i}}-\sigma$. From \eqref{5191}, we see that
 $$\delta(\varepsilon)= \f{s[(1+\varepsilon)p_{i}-(1+\varepsilon)q_{i}]}{(1+\varepsilon)(p_{i}-q_{i})}-\sum \f{ [\varepsilon(1+\varepsilon)p_{i}-(1+\varepsilon)q_{i}]}{(1+\varepsilon)p_{i}(1+\varepsilon)(p_{i}-q_{i})}-\sigma,$$
 and $\delta(\varepsilon)$ is a continuous function at neighborhood of $0$. Since $\delta(0)>0$, there exists a sufficiently small $\varepsilon>0$ such that $\delta(\varepsilon)>0$.
 Then it follows from \eqref{3.171} that
 \be\ba
\|u\|_{\dot{B}^{\sigma}_{\overrightarrow{p}, 1}}&
&\leq C\f{2^{k[\sigma+\sum(\f1q_{i}-\f1p_{i})]}}{2^{ [\sigma+\sum(\f1r_{i}-\f1p_{i})]}-1} \| u\|_{\dot{B}^{0}_{q,\infty,\infty}}+ C\f{-k[s\alpha- \sum\f{ \varepsilon\alpha}{(1+\varepsilon)p_{i}}-\sigma]}{1-2^{s\alpha- \sum\f{ \varepsilon\alpha}{(1+\varepsilon)p_{i}}-\sigma}}  \| u\|_{\dot{B}^{0}_{\overrightarrow{q}, \infty}}^{\alpha} \| u\|_{\dot{B}^{s}_{\overrightarrow{r}, \infty}}^{1-\alpha},
\ea\ee
 which implies that
$$\ba
\|u\|_{\dot{B}^{\sigma}_{\overrightarrow{p}, 1}} &\leq C  \| u\|_{\dot{B}^{0}_{\overrightarrow{q}, \infty}} ^{\theta} \| u\|_{\dot{B}^{s}_{\overrightarrow{r} ,\infty}}^{1-\theta}.
\ea$$

($I_{22}$)$\sum\f{1}{r_{i}}=\sum\f{1}{p_{i}}=\sum\f{1}{q_{i}}$:

From \eqref{mixed3.3}, we know that this subcase will not happen.

($I_{23}$)$\sum\f{1}{q_{i}}<\sum\f{1}{r_{i}}=\sum\f{1}{p_{i}}$:

This subcase is similar to the subcase ($I_{15}$). We omit the details here.

($I_{3}$)
$\sum\f{1}{p_{i}} <\sum\f{1}{r_{i}}$:

($I_{31}$)
$\sum\f{1}{p_{i}} <\sum\f{1}{r_{i}} =\sum\f{1}{q_{i}}$:

According to  the Bernstein inequality in Lemma \ref{lem2.1},  $\sigma+\sum (\f1r_{i}-\f1p_{i})>0$ and ${s-\sigma} +\sum\f{1}{p_{i}}-\sum\f{ 1}{r_{i}}>0$, we observe that
\be\ba\label{15.2}
\|u\|_{\dot{B}^{\sigma}_{\overrightarrow{p},1}}&=\sum_{j\leq k}2^{j\sigma}\|\dot{\Delta}_{j}u\|_{L^{\overrightarrow{p}}(\mathbb{R})^{n}}+\sum_{j> k}2^{j\sigma}\|\dot{\Delta}_{j}u\|_{L^{\overrightarrow{p}}(\mathbb{R})^{n}}\\
&\leq C\sum_{j\leq k}2^{j[\sigma+\sum (\f1r_{i}-\f1p_{i})]}\|\Delta_{j}u\|_{L^{\overrightarrow{r}}(\mathbb{R}^{n})}+C\sum_{j> k}2^{-j[s-\sigma-\sum (\f1r_{i}-\f1p_{i})]}2^{js}\|\dot{\Delta}_{j}u\|_{L^{\overrightarrow{r}}(\mathbb{R}^{n})}\\
&\leq C\f{2^{k[\sigma+\sum(\f1r_{i}-\f1p_{i})]}}{2^{ [\sigma+\sum(\f1r_{i}-\f1p_{i})]}-1} \| u\|_{\dot{B}^{0}_{\overrightarrow{r},\infty}}+C\f{ 2^{-k[s-\sigma-\sum(\f1r_{i}-\f1p_{i})]}}{1-2^{- [s-\sigma-\sum(\f1r_{i}-\f1p_{i})]}}  \| u\|_{\dot{B}^{s}_{\overrightarrow{r},\infty}}.
\ea\ee
Thus the desired inequality
\be\ba\label{desriinequ}
\|u\|_{\dot{B}^{\sigma}_{p,1,1}} &\leq C  \| u\|_{\dot{B}^{0}_{\overrightarrow{r},\infty}} ^{\theta} \| u\|_{\dot{B}^{s}_{\overrightarrow{r},\infty}}^{1-\theta}
\ea\ee
follows by taking
$$ {2^{k[\sigma+\sum(\f1r_{i}-\f1p_{i})]}}  \| u\|_{\dot{B}^{0}_{\overrightarrow{r},\infty}}\approx  { 2^{-k[s-\sigma-\sum(\f1r_{i}-\f1p_{i})]}} \| u\|_{\dot{B}^{s}_{\overrightarrow{r},\infty,\infty}}.$$

($I_{32}$) $\sum\f{1}{p_{i}} <\sum\f{1}{r_{i}} <\sum\f{1}{q_{i}}$:

  We proceed as in the proof of \eqref{15.2} and get
$$\ba
\|u\|_{\dot{B}^{\sigma}_{\overrightarrow{p} ,1}}&=\sum_{j\leq k}2^{j\sigma}\|\dot{\Delta}_{j}u\|_{L^{\overrightarrow{p} }(\mathbb{R}^{n})}+\sum_{j> k}2^{j\sigma}\|\dot{\Delta}_{j}u\|_{L^{\overrightarrow{p} }(\mathbb{R}^{n})}\\
&\leq C\sum_{j\leq k}2^{j[\sigma+\sum(\f1q_{i}-\f1p_{i})]}\|\dot{\Delta}_{j}u\|_{L^{\overrightarrow{q}}(\mathbb{R}^{n})}+C\sum_{j> k}2^{-j[s-\sigma-\sum(\f1r_{i}-\f1p_{i})]}2^{js}\|\dot{\Delta}_{j}u\|_{L^{r}(\mathbb{R}^{n})}\\
&\leq C\f{2^{k[\sigma+\sum(\f1q_{i}-\f1p_{i})]}}{2^{ [\sigma+\sum(\f1q_{i}-\f1p_{i})]}-1} \| u\|_{\dot{B}^{0}_{q,\infty}}+C\f{ 2^{-k[s-\sigma-\sum(\f1r_{i}-\f1p_{i})]}}{1-2^{- [s-\sigma-\sum(\f1r_{i}-\f1p_{i})]}}  \| u\|_{\dot{B}^{s}_{\overrightarrow{r} \infty}},
\ea$$
where we have used the fact that $\sigma+\sum(\f1r_{i}-\f1p_{i})>0$ and ${s-\sigma} +\sum\f{1}{p_{i}}-\sum\f{ 1}{r_{i}}>0$.\\
Consequently, as the derivation of \eqref{desriinequ},   we find out that
$$\ba
\|u\|_{\dot{B}^{\sigma}_{p,1,1}} &\leq C  \| u\|_{\dot{B}^{0}_{\overrightarrow{q}}} ^{\theta} \| u\|_{\dot{B}^{s}_{\overrightarrow{r}, \infty}}^{1-\theta}.
\ea$$

($I_{33}$) $\sum\f{1}{p_{i}} <\sum\f{1}{q_{i}}<\sum\f{1}{r_{i}} $:

 Taking advantage of Bernstein inequalities in Lemma  \ref{lem2.1} and ${s-\sigma} +\sum\f{1}{p_{i}}-\sum\f{ 1}{r_{i}}>0$, we know that
$$\ba
\|u\|_{\dot{B}^{\sigma}_{\overrightarrow{p},1 }}&=\sum_{j\leq k}2^{j\sigma}\|\dot{\Delta}_{j}u\|_{L^{\overrightarrow{p} }(\mathbb{R}^{n})}+\sum_{j> k}2^{j\sigma}\|\dot{\Delta}_{j}u\|_{L^{\overrightarrow{p} }(\mathbb{R}^{n})}\\
&\leq \sum_{j\leq k}2^{j[\sigma+\sum(\f1q_{i}-\f1p_{i})]}\|\dot{\Delta}_{j}u\|_{\overrightarrow{q}(\mathbb{R}^{n})}+\sum_{j> k}2^{-j[s-\sigma-\sum(\f1r_{i}-\f1p_{i})]}2^{js}\|\dot{\Delta}_{j}u\|_{L^{\overrightarrow{r } }(\mathbb{R}^{n})}\\
&\leq \f{2^{k[\sigma+\sum(\f1q_{i}-\f1p_{i})]}}{2^{ [\sigma+\sum(\f1q_{i}-\f1p_{i})]}-1} \| u\|_{\dot{B}^{0}_{q,\infty}}+\f{ 2^{-k[s-\sigma-\sum(\f1r_{i}-\f1p_{i})]}}{1-2^{- [s-\sigma-\sum(\f1r_{i}-\f1p_{i})]}}  \| u\|_{\dot{B}^{s}_{\overrightarrow{r}, \infty}},
\ea$$
where we have used the fact that $\sigma+\sum(\f1r_{i}-\f1p_{i})>0$.\\
Therefore,   it follows from
 $$\f{2^{k[\sigma+\sum(\f1q_{i}-\f1p_{i})]}}{2^{ [\sigma+\sum(\f1q_{i}-\f1p_{i})]}-1} \| u\|_{\dot{B}^{0}_{q,\infty}}\approx\f{ 2^{-k[s-\sigma-\sum(\f1r_{i}-\f1p_{i})]}}{1-2^{- [s-\sigma-\sum(\f1r_{i}-\f1p_{i})]}}  \| u\|_{\dot{B}^{s}_{\overrightarrow{r}, \infty}}$$
that
$$\ba
\|u\|_{\dot{B}^{\sigma}_{p,1,1}} &\leq C\B(\f{1}{\sigma+\sum(\f1q_{i}-\f1p_{i})}+\f{1}{s-\sigma-\sum(\f1r_{i}-\f1p_{i})}\B) \| u\|_{\dot{B}^{0}_{\overrightarrow{r},\infty}} ^{\theta} \| u\|_{\dot{B}^{s}_{\overrightarrow{r}, \infty}}^{1-\theta}.
\ea$$

($I_{34}$) $ \sum\f1q_{i}=\sum \f1p_{i}<\f1r_{i}$:

 It is worth remarking  that this subcase implies that $\sigma>0$. In the same manner as   \eqref{5191}, we see that
\be
\|\dot{\Delta}_{j}u\|_{L^{\overrightarrow{p} }(\mathbb{R}^{n})}\leq C
\|\dot{\Delta}_{j}u\|^{1-\alpha}_{L^{\overrightarrow{r }}(\mathbb{R}^{n})}
\|\dot{\Delta}_{j}u\|^{\alpha}_{L^{(1+\varepsilon)\overrightarrow{p},\infty}(\mathbb{R}^{n})},
~~\f{1}{p_{i}}=\f{1-\alpha}{q_{i}}+\f{\alpha}{(1+\varepsilon)p_{i}},
\ee
where $\varepsilon>0$ will be determined later.
$$\ba
\|u\|_{\dot{B}^{\sigma}_{\overrightarrow{p} ,1}}
&\leq C\sum_{j\leq k}2^{j\sigma}\|\dot{\Delta}_{j}u\|^{1-\alpha}_{L^{\overrightarrow{r } }(\mathbb{R}^{n})}
\|\Delta_{j}u\|^{\alpha}_{L^{(1+\varepsilon)\overrightarrow{p} }(\mathbb{R}^{n})}+C\sum_{j> k}2^{-j[s-\sigma-\sum(\f1r_{i}-\f1p_{i})]}2^{js}\|\dot{\Delta}_{j}u\|_{L^{\overrightarrow{r } }(\mathbb{R}^{n})}\\
&\leq  C\sum_{j\leq k}2^{j[\sigma+\sum\f{ \varepsilon\alpha}{p_{i}(1+\varepsilon)}-s(1-\alpha)]}  \| u\|_{\dot{B}^{0}_{\overrightarrow{q}, \infty}}^{\alpha} \| u\|_{\dot{B}^{s}_{\overrightarrow{r } ,\infty}}^{1-\alpha}+C\f{ 2^{-k[s-\sigma-\sum(\f1r_{i}-\f1p_{i})]}}{1-2^{- [s-\sigma-\sum(\f1r_{i}-\f1p_{i})]}}  \| u\|_{\dot{B}^{s}_{\overrightarrow{r}, \infty}},
\ea$$
As the arguments in ($I_{21}$), we can choose $\varepsilon>0$ sufficiently small to ensure that
$\sigma+\f{n\varepsilon\alpha}{p(1+\varepsilon)}-s(1-\alpha)>0$ and get the desired inequality. We omit the details.

($I_{35}$) $\sum\f{1}{q_{i}} <\sum\f{1}{p_{i}}<\sum\f{1}{r_{i}} $:

 Notice that $\sum\f{1}{r_{i}}+\sigma-s< \sum\f{1}{p_{i}}$ and  \eqref{wwkey} mean that
\be\label{3.19}
\sum\f{ 1}{p_{i}}
 < (1-\f{\sigma}{s})\sum\f{1}{q_{i}}+\f{\sigma}{s}\sum\f{1}{r_{i}}.
\ee
This together with \eqref{addc} leads to
\be\label{45.1}
\sigma-s(1-\alpha)>0
\ee
and
\be\label{5.2}
\|\dot{\Delta}_{j}u\|_{L^{\overrightarrow{p}}(\mathbb{R}^{n})}
\leq
\|\dot{\Delta}_{j}u
\|^{\alpha}_{L^{\overrightarrow{q} }(\mathbb{R}^{n})}
\|\dot{\Delta}_{j}u\|^{1-\alpha}_{L^{\overrightarrow{r}}(\mathbb{R}^{n})}.
\ee
By means of Bernstein inequalities, \eqref{5.2}, \eqref{45.1} and   $\sum\f{1}{r_{i}}+\sigma-s< \sum\f{1}{p_{i}}$, we infer  that
$$\ba
\|u\|_{\dot{B}^{\sigma}_{\overrightarrow{p},1}}&=\sum_{j\leq k}2^{j\sigma}\|\dot{\Delta}_{j}u\|_{L^{\overrightarrow{p}}(\mathbb{R}^{n})}+\sum_{j> k}2^{j\sigma}\|\dot{\Delta}_{j}u\|_{L^{\overrightarrow{p}}(\mathbb{R}^{n})}\\
&\leq \sum_{j\leq k}2^{j\sigma}\|\dot{\Delta}_{j}u
\|^{\alpha}_{L^{\overrightarrow{q} }(\mathbb{R}^{n})}
\|\dot{\Delta}_{j}u\|^{1-\alpha}_{L^{\vec{r }}(\mathbb{R}^{n})} +\sum_{j> k}2^{-j[s-\sigma-\sum(\f1r_{i}-\f1p_{i})]}2^{js}\|\dot{\Delta}_{j}u
\|_{L^{\overrightarrow{r}}(\mathbb{R}^{n})}\\
&\leq C\f{ 2^{ k[\sigma-s(1-\alpha) ]}}{2^{  [\sigma-s(1-\alpha) ]}-1} \| u\|_{\dot{B}^{0}_{\overrightarrow{q} }}^{\alpha} \| u\|_{\dot{B}^{s}_{\overrightarrow{r} ,\infty}}^{1-\alpha}+C\f{ 2^{-k[s-\sigma-\sum(\f1r_{i}-\f1p_{i})]}}{1-2^{- [s-\sigma-\sum(\f1r_{i}-\f1p_{i})]}}  \| u\|_{\dot{B}^{s}_{r_{i}, \infty} }.
\ea$$
Therefore we may deduce that
 $$\ba
\|u\|_{\dot{B}^{\sigma}_{\overrightarrow{p},1 }} &\leq C\B(\f{1}{\sigma-s(1-\alpha)}+\f{1}{s-\sigma-\sum(\f1r_{i}-\f1p_{i})}\B) \| u\|_{\dot{B}^{0}_{\overrightarrow{r},\infty}} ^{\theta} \| u\|_{\dot{B}^{s}_{\overrightarrow{r}, \infty}}^{1-\theta}.
\ea$$

{\bf Case 2:} $\sum\f1q_{i}+ {\sigma} <\sum\f1p_{i}$  and  ${s-\sigma} +\sum\f{1 }{p_{i}}-\sum\f{ 1}{r_{i}}<0$.\\
  It is clear that  $\sum\f1q_{i}+ {\sigma} <\sum\f1p_{i}<\sum\f{ 1}{r_{i}}$.
 From $\sum\f{1}{r_{i}}+\sigma-s> \sum\f{1}{p_{i}}$ and \eqref{wwkey}, we see that
$$\sum\f{ 1}{p_{i}}
 >\sum(1-\f{\sigma}{s})\f{1}{q_{i}}+\f{\sigma}{s}\sum\f{1}{r_{i}},$$
 which  together with \eqref{addc} leads to
 \be\label{45.5}
 s(1-\alpha)-\sigma >0
 \ee
and
\be\label{5.412}
\|\dot{\Delta}_{j}u\|_{L^{\overrightarrow{p} }(\mathbb{R}^{n})}\leq\|\dot{\Delta}_{j}u
\|^{\alpha}_{L^{\overrightarrow{q}}(\mathbb{R}^{n})}\|\dot{\Delta}_{j}u
\|^{1-\alpha}_{L^{\overrightarrow{r}}(\mathbb{R}^{n})}.
\ee
In view of Bernstein inequalities, \eqref{5.412}, ${s-\sigma} +\sum\f{1 }{p_{i}}-\sum\f{ 1}{r_{i}}<0$ and \eqref{45.5}, we have
 $$\ba
\|u\|_{\dot{B}^{\sigma}_{\overrightarrow{p},1}}&=\sum_{j\leq k}2^{j\sigma}\|\dot{\Delta}_{j}u\|_{L^{\overrightarrow{p} }(\mathbb{R}^{n})}+\sum_{j> k}2^{j\sigma}\|\dot{\Delta}_{j}u\|_{L^{\overrightarrow{p} }(\mathbb{R}^{n})}\\
&\leq \sum_{j\leq k} 2^{ j[\sigma-s+\sum(\f1r_{i}-\f1p_{i})]}2^{js}\|\dot{\Delta}_{j}u\|_{L^{\overrightarrow{r }}(\mathbb{R}^{n})}+
\sum_{j> k}2^{j\sigma}\|\dot{\Delta}_{j}u
\|^{\alpha}_{L^{\overrightarrow{q}}(\mathbb{R}^{n})}\|\dot{\Delta}_{j}u
\|^{1-\alpha}_{L^{\overrightarrow{r}}(\mathbb{R}^{n})}
\\
&\leq \f{ 2^{ k[   \sigma-s+\sum(\f1r_{i}-\f1p_{i})]}}{ 2^{- [\sigma -s+\sum(\f1r_{i}-\f1p_{i})]-1}}  \| u\|_{\dot{B}^{s}_{\overrightarrow{r}, \infty}} +\f{ 2^{-k[s(1-\alpha)-\sigma ]}}{1-2^{- [s(1-\alpha)-\sigma ]}} \| u\|_{\dot{B}^{0}_{\overrightarrow{q},\infty}}^{\alpha} \| u\|_{\dot{B}^{s}_{\overrightarrow{r}, \infty}}^{1-\alpha}.
\ea$$
Repeating the  deduction of \eqref{desriinequ}, we know that
 $$\ba
\|u\|_{\dot{B}^{\sigma}_{\overrightarrow{p},1 }} &\leq C\B( \f{1}{s-\sigma-\sum(\f1r_{i}-\f1p_{i})}+\f{1}{s(1-\alpha)-\sigma}\B) \| u\|_{\dot{B}^{0}_{\overrightarrow{q},\infty}} ^{\theta} \| u\|_{\dot{B}^{s}_{\overrightarrow{r}, \infty}}^{1-\theta}.
\ea$$

{\bf Case 3:}  $\sum\f1q_{i}+ {\sigma} =\sum\f1p_{i}$  and  ${s-\sigma} +\sum\f{1}{p_{i}}-\sum\f{ 1}{r_{i}}=0$. This case  will not be considered in this theorem, owing to the hypothesis that $s\neq \sum(\f1r_{i}-\f1q_{i})$.

At this stage, we complete the proof of Theorem \ref{the1.2}.
\end{proof}

\section{An
application of Gagliardo-Nirenberg inequality in anisotropic Lebesgue spaces}
This section is    concerned with  the application  of Gagliardo-Nirenberg inequality \eqref{glxied1} in anisotropic Lebesgue spaces to the
   energy conservation criteria of 3D Navier-Stokes equations.
 \begin{proof}[Proof of Theorem \ref{the1.3}]
(1) $v\in L^{\f{2p}{p-1}}(0,T;\,L^{\f{2q_{1}}{q_{1}-1}}L^{\f{2q_{2}}{q_{2}-1}}L^{\f{2q_{3}}{q_{3}-1}}(\mathbb{R}^{3}))$, $\nabla v \in L^{p} (0, T ; L^{\overrightarrow{q}} (\mathbb{R}^{3} ) )$, $1<p,\overrightarrow{q}\leq\infty.$

For $N\in\mathbb{N},$ applying nonhomogeneous Littlewood-Paley operator $S_{N}$ to the Navier-Stokes equations \eqref{NS} and taking the $L^2$ inner product with $S_{N}v$, one arrives at
\be\label{zheng1}
\f12\|S_{N}v(t)\|_{L^{2}(\mathbb{R}^{3})}^{2}+\int_{0}^{t}\|\nabla S_{N}v\|_{L^{2}(\mathbb{R}^{3})}^{2}ds =\f12\|S_{N}v_{0}\|_{L^{2}(\mathbb{R}^{3})}^{2}-\int_{0}^{t}\int_{\mathbb{R}^3}
\partial_{j}(v_{j}v_{i})S^{2}_{N}v_{i}\,dxds.
\ee
Thus, in order to prove energy equality
\be\label{EI}
 \|v(t)\|_{L^{2}(\mathbb{R}^{3})}^{2}+2 \int_{0}^{t }\|\nabla v\|_{L^{2}(\mathbb{R}^{3})}^{2}ds= \|v_0\|_{L^{2}(\mathbb{R}^{3})}^{2}
\ee
for all $t\in[0,T)$, it suffices to  show that
$$\lim_{N\rightarrow\infty}\int_{0}^{t}\int_{\mathbb{R}^3}
\partial_{j}(v_{j}v_{i})S^{2}_{N}v_{i}\,dxds=0$$
by estimating  the nonlinear term in \eqref{zheng1}.

Indeed, it follows from the divegence-free condition and integration  by parts  that
\begin{equation*}
\begin{split}
&-\int_{0}^{t}\int_{\mathbb{R}^3}
\partial_{j}(v_{j}v_{i})S^{2}_{N}v_{i}\,dxds\\
=&-\int_{0}^{t}\int_{\mathbb{R}^3}
\partial_{j}(v_{j}S^{2}_{N}v_{i})S^{2}_{N}v_{i}\,dxds
-\int_{0}^{t}\int_{\mathbb{R}^3}
\partial_{j}[v_{j}(\text{I}_{\textnormal d}-S^{2}_{N})v_{i}]S^{2}_{N}v_{i}\,dxds\\
=&-\f12\int_{0}^{t}\int_{\mathbb{R}^3}
v_{j}\partial_{j}(S^{2}_{N}v_{i})^{2} \,dxds-\int_{0}^{t}\int_{\mathbb{R}^3}
\partial_{j}\big[v_{j}(\text{I}_{\textnormal d}-S^{2}_{N})v_{i}\big]S^{2}_{N}v_{i}\,dxds\\
=& \int_{0}^{t}\int_{\mathbb{R}^3}
S_{N}\big[v_{j}(\text{I}_{\textnormal d}-S^{2}_{N})v_{i}\big]S_{N}\partial_{j}v_{i}\,dxds,
\end{split}
\end{equation*}
where the notation $\text{I}_{\textnormal d}$ represents the identity operator. We further employ the H\"older inequality \eqref{HIAL} and Young's inequality \eqref{YoungI} in
anisotropic Lebesgue spaces to get
\be\ba
&\left|\int_{0}^{t}\int_{\mathbb{R}^3}S_{N}
\big[v_{j}(\text{I}_{\textnormal d}-S^{2}_{N})v_{i}\big]S_{N}\partial_{j}v_{i}\,dxds\right| \\
\leq&\big\| S_{N}\big[v_{j}(\text{I}_{\textnormal d}-S^{2}_{N})v_{i}\big]\big\|_{L^{\f{p}{p-1}}(0,T;
L^{\f{q_{1}}{q_{1}-1}}L^{\f{q_{2}}{q_{2}-1}}
L^{\f{q_{3}}{q_{3}-1}}(\mathbb{R}^{3}))}  \big\| S_{N}\partial_{j}v_{i} \big\|_{L^{p}(0,T;L^{\overrightarrow{q}}(\mathbb{R}^{3}))}\\
\leq&  C\big\|  v_{j}(\text{I}_{\textnormal d}-S^{2}_{N})v_{i}\big\|_{L^{\f{p}{p-1}}(0,T;L^{\f{q_{1}}{q_{1}-1}}L^{\f{q_{2}}{q_{2}-1}}
L^{\f{q_{3}}{q_{3}-1}}(\mathbb{R}^{3}))}  \big\| \partial_{j}v_{i} \big\|_{L^{p}(0,T;L^{\overrightarrow{q}}(\mathbb{R}^{3}))}\\
\leq& C\|  v   \|_{L^{\f{2p}{p-1}}(0,T;L^{\f{2q_{1}}{q_{1}-1}}L^{\f{2q_{2}}{q_{2}-1}}
L^{\f{2q_{3}}{q_{3}-1}}(\mathbb{R}^{3}))} \big\|\big(\text{I}_{\textnormal d}+S_{N}\big)\big(\text{I}_{\textnormal d}-S_{N}\big)v \big\|_{L^{\f{2p}{p-1}}(0,T;L^{\f{2q_{1}}{q_{1}-1}}L^{\f{2q_{2}}{q_{2}-1}}
L^{\f{2q_{3}}{q_{3}-1}}(\mathbb{R}^{3}))}\\& \times \| \nabla  v \|_{L^{p}(0,T;L^{\overrightarrow{q}}(\mathbb{R}^{3}))}\\
\leq& C\|  v   \|_{L^{\f{2p}{p-1}}(0,T;L^{\f{2q_{1}}{q_{1}-1}}L^{\f{2q_{2}}{q_{2}-1}}
L^{\f{2q_{3}}{q_{3}-1}}(\mathbb{R}^{3}))} \big\|\big(\text{I}_{\textnormal d}-S_{N}\big)v \big\|_{L^{\f{2p}{p-1}}(0,T;L^{\f{2q_{1}}{q_{1}-1}}L^{\f{2q_{2}}{q_{2}-1}}
L^{\f{2q_{3}}{q_{3}-1}}(\mathbb{R}^{3}))} \\& \times \| \nabla  v \|_{L^{p}(0,T;L^{\overrightarrow{q}}(\mathbb{R}^{3}))}.
\ea\ee
In the light of Lemma \ref{lem2.3}, we see that, as $N\rightarrow\infty$,
$$\big\|\big(\text{I}_{\textnormal d}-S_{N}\big)v \big\|_{L^{\f{2p}{p-1}}(0,T;\,L^{\f{2q_{1}}{q_{1}-1}}L^{\f{2q_{2}}{q_{2}-1}}
L^{\f{2q_{3}}{q_{3}-1}}
(\mathbb{R}^{3}))}\rightarrow0.
$$
It turns out that, as $N\rightarrow\infty$,
$$
-\int_{0}^{t}\int_{\mathbb{R}^3}
\partial_{j}(v_{j}v_{i})S^{2}_{N}v_{i}\,dxds\rightarrow0.
$$
Hence, letting $N\rightarrow\infty$ in    \eqref{zheng1}, we conclude the energy equality   \eqref{EI}.
The proof of this part is completed.

 (2)   $v\in L^{p}(0,T;L^{\overrightarrow{q}}(\mathbb{R}^{3})),~\text{with}~\f{1}{p}+
 \f{1}{q_{1}}+
 \f{1}{q_{2}}+
 \f{1}{q_{3}}=1~\text{and}~~ \sum_{i=1}^{3}\frac{1}{q_{i}}\leq1,~1<\overrightarrow{q}\leq4.$

Making use of the Gagliardo-Nirenberg inequality \eqref{glxied1} in mixed norm spaces, we
obtain
$$
\|v\|_{L^{4}(\mathbb{R}^{3})} \leq C\|\nabla v\|_{L^{ 2}(\mathbb{R}^{3})}^{\f{ \sum\f{1}{q_{i}}-\f34 }{\sum\f{1}{q_{i}}-\f12}}\|v\|_{L^{\overrightarrow{q}}(\mathbb{R}^{3})}^{\f{\f14}{\sum\f{1}{q_{i}}-\f12}},
$$
where we have used the fact that $\sum\frac{1}{q_{i}}\geq  \f34$ and $\f14\leq \f12(\f{ \sum\f{1}{q_{i}}-\f34 }{\sum\f{1}{q_{i}}-\f12})+\f{1}{q_{j}}(\f{\f14}{\sum\f{1}{q_{i}}-\f12})$ for $j=1,2,3$.

Since $\sum_{i=1}^{3}\f{1}{q_{i}} \leq 1$, we further conclude by the H\"older inequality  that
$$\ba
\int_{0}^{T}\|v\|_{L^4(\mathbb{R}^{3})}^{4} d t \leq&C \int_{0}^{T}\|\nabla v\|_{L^{ 2}(\mathbb{R}^{3})}^{\f{ \sum\f{4}{q_{i}}-3 }{\sum\f{1}{q_{i}}-\f12}}\|v\|_{L^{\overrightarrow{q}}(\mathbb{R}^{3})}^{\f{1}{\sum\f{1}{q_{i}}-\f12}}d t\\
\leq & C\B(\int_{0}^{T}\|\nabla v\|^{2}_{L^{ 2}(\mathbb{R}^{3})}d t\B)^{\f{2(\sum\f{1}{q_{i}}-\f34)}{\sum\f{1}{q_{i}}-\f12}}\B(\int_{0}^{T}
\|v\|^{\f{1}{1-\sum\f{1}{q_{i}}}}_{L^{\overrightarrow{q}}(\mathbb{R}^{3})}
  d t\B)^{\frac{1-\sum\f{1}{q_{i}}}{\sum\f{1}{q_{i}}-\f12}},
\ea$$
which leads to  $v\in L^{4}(0,T;L^{4}(\mathbb{R}^{3})).$  The Lions's class for energy equality of weak solutions to Navier-Stokes equations allows us to  complete the proof of this part.

(3) $\nabla v \in L^{p} (0, T ; L^{\overrightarrow{q}} (\mathbb{R}^{3}) ),~\text{with}~
\frac  {1}{p}+\f{1}{q_{1}}+
 \f{1}{q_{2}}+
 \f{1}{q_{3}} =2~\text{and}~ \sum_{i=1}^{3}\frac{1}{q_{i}}\leq2$, $~1<\overrightarrow{q}\leq \frac{9}{5}$.

According to the result of (1) in this theorem, it suffices to show that $v\in L^{\f{2p}{p-1}}(0,T;L^{\f{2q_{1}}{q_{1}-1}}L^{\f{2q_{2}}{q_{2}-1}}
L^{\f{2q_{3}}{q_{3}-1}}
(\mathbb{R}^{3})).$ Thanks to the Gagliardo-Nirenberg inequality \eqref{glxied1} and the Sobolev embedding \eqref{bpsobo} in anisotropic Lebesgue spaces, we have
$$\ba
\|v\|_{L^{\f{2q_{1}}{q_{1}-1}}L^{\f{2q_{2}}{q_{2}-1}}
L^{\f{2q_{3}}{q_{3}-1}}
(\mathbb{R}^{3})} \leq& C\|\nabla v\|_{L^{2}(\mathbb{R}^{3})}^{\f{3(\f{1}{q_{1}}+
 \f{1}{q_{2}}+
 \f{1}{q_{3}})-5}{2(\f{1}{q_{1}}+
 \f{1}{q_{2}}+
 \f{1}{q_{3}})-3 }}\| v\|_{L^{\f{3q_{1}}{3-q_{1}}}L^{\f{3q_{2}}{3-q_{2}}}L^{\f{3q_{3}}{3-q_{3}}}(\mathbb{R}^{3})}^{\f{ 2- (\f{1}{q_{1}}+
 \f{1}{q_{2}}+
 \f{1}{q_{3}})}{ 2(\f{1}{q_{1}}+
 \f{1}{q_{2}}+
 \f{1}{q_{3}})-3}}\\\leq& C\|\nabla v\|_{L^{2}(\mathbb{R}^{3})}^{\frac{p-3}{p-2}}\|\nabla v\|_{L^{\overrightarrow{q}}(\mathbb{R}^{3})}^{\frac{1}{p-2}},
\ea$$
which leads to
$$\ba
&\|v\|_{L^{\f{2p}{p-1}}(0,T;L^{\f{2q_{1}}{q_{1}-1}}L^{\f{2q_{2}}{q_{2}-1}}
L^{\f{2q_{3}}{q_{3}-1}}
(\mathbb{R}^{3}))}\leq  C\|\nabla v\|_{L^{2}(0,T;L^{2}(\mathbb{R}^{3}))}^{\frac{p-3}{p-2}}\|\nabla v\|_{L^{p}(0,T;L^{\overrightarrow{q}}(\mathbb{R}^{3}))}^{\frac{1}{p-2}}.
\ea$$
Here the restriction $1<\overrightarrow{q}\leq \frac{9}{5}$ guarantees that the second condition in  \eqref{condi}  is valid.
Hence, we finish the proof of this part.

(4) $\nabla v \in L^{p} (0, T ; L^{\overrightarrow{q}} (\mathbb{R}^{3}) ),~\text{with}~
\frac{1}{p}+\frac{2}{5  }(\f{1}{q_{1}}+
 \f{1}{q_{2}}+
 \f{1}{q_{3}})=1~\text{and}~\sum_{i=1}^{3}\frac{1}{q_{i}}\leq\f53,~1<\overrightarrow{q}\leq3.$

As mentioned above, in terms of the result of (1) in this theorem, it is enough to prove that $v\in L^{\f{2p}{p-1}}(0,T;L^{\f{2q_{1}}{q_{1}-1}}L^{\f{2q_{2}}{q_{2}-1}}
L^{\f{2q_{3}}{q_{3}-1}}
(\mathbb{R}^{3}))$. To this end, in view  of the Gagliardo-Nirenberg inequality \eqref{glxied1} in   anisotropic Lebesgue spaces,  we deduce that
$$
\|v\|_{L^{\f{2q_{1}}{q_{1}-1}}L^{\f{2q_{2}}{q_{2}-1}}
L^{\f{2q_{3}}{q_{3}-1}}
(\mathbb{R}^{3})}\leq C\|v\|_{L^{2}(\mathbb{R}^{3})}^{\f{5 -3(\f{1}{q_{1}}+
 \f{1}{q_{2}}+
 \f{1}{q_{3}})}{5 -2(\f{1}{q_{1}}+
 \f{1}{q_{2}}+
 \f{1}{q_{3}})}}\|\nabla v\|_{L^{\overrightarrow{q}}(\mathbb{R}^{3})}^{\f{ \f{1}{q_{1}}+
 \f{1}{q_{2}}+
 \f{1}{q_{3}} }{5 -2(\f{1}{q_{1}}+
 \f{1}{q_{2}}+
 \f{1}{q_{3}})} }=C\|v\|_{L^{2}(\mathbb{R}^{3})}^{\frac{3-p}{2}}\|\nabla v\|_{L^{\overrightarrow{q}}(\mathbb{R}^{3})}^{\frac{p-1}{2}},
$$
which implies that
$$
\|v\|_{L^{\f{2p}{p-1}}(0,T;L^{\f{2q_{1}}{q_{1}-1}}L^{\f{2q_{2}}{q_{2}-1}}
L^{\f{2q_{3}}{q_{3}-1}}
(\mathbb{R}^{3}))}\leq C\|v\|_{L^{\infty}(0,T;L^{2}(\mathbb{R}^{3}))}^{\frac{3-p}{2}}\|\nabla v\|_{L^{p}(0,T;L^{\overrightarrow{q}}(\mathbb{R}^{3}))}^{\frac{p-1}{2}}.
$$
At this stage, all proofs of this theorem are given.
\end{proof}

 \section*{Acknowledgements}

 Wang was partially supported by  the National Natural
 Science Foundation of China under grant (No. 11971446, No. 12071113   and  No.  11601492).
 Wei was partially supported by the National Natural Science Foundation of China under grant (No. 11601423, No. 11871057).


\begin{thebibliography}{00}







\bibitem{[BCD]}
H. Bahouri,  J. Chemin and R. Danchin, Fourier analysis and nonlinear partial differential equations. Grundlehren der Mathematischen Wissenschaften 343. Springer, Heidelberg, 2011.


\bibitem{[BY]}H. Beirao da Veiga and J. Yang, On the Shinbrot's criteria for energy equality to Newtonian fluids: a simplified proof, and an extension of the range of application. Nonlinear Anal. 196 (2020), 111809, 4 pp.
    \bibitem{[BY2]}
H. Beirao da Veiga and J.  Yang, On the energy equality for solutions to Newtonian and non-Newtonian fluids. Nonlinear Anal. 185 (2019), 388--402.
 \bibitem{[BC]}L. C. Berselli and E. Chiodaroli,   On the energy equality for the 3D Navier-Stokes equations. Nonlinear Anal. 192 (2020), 111704, 24 pp.
     \bibitem{[BP]}
A. Benedek and R. Panzone. The space $L^{p}$ with mixed norm.  Duke Math.
J.   28   (1961),  301--324.
\bibitem{[BIN]} O. V. Besov,  V. P. Il'in and S.  M.  Nikolski\v{i}, Integral representations of functions and imbedding theorems, vol. I. Wiley,
New York (1978).
 \bibitem{[BYT]}
  J.  L. Bona,  L.  Yue  and M.  M. Tom, The Cauchy Problem and Stability of Solitary-Wave Solutions for RLW-KP-Type Equations. J. Differential Equations,  185 (2002), 437--482.
\bibitem{[BM]}
A. d. Bouard  and Y. Martel, Non existence of L 2-compact solutions of the Kadomtsev-Petviashvili II equation. Math. Anna.,  328 (2004), 525--544.

\bibitem{[BKO]}
J. Byeon, H, Kim and J. Oh, Interpolation inequalities in function spaces of Sobolev-Lorentz type.  arXiv:2109.07518, 2021.
\bibitem{[CT]}
 C. Cao and E. S. Titi, Global regularity criterion for the 3D Navier-Stokes equations involving one
entry of the velocity gradient tensor, Arch. Ration. Mech. Anal. 202 (2011),   919--932
\bibitem{[CCFS]}
A. Cheskidov,  P. Constantin, S. Friedlander and R. Shvydkoy, Energy conservation and Onsager's conjecture for the Euler equations. Nonlinearity, 21 (2008), 1233--52.



\bibitem{[CL]}
   A. Cheskidov and X. Luo, Energy equality for the Navier-Stokes equations in weak-in-time Onsager spaces. Nonlinearity, 33 (2020), 1388--1403.


\bibitem{[Chikami]}
N.  Chikami,
On Gagliardo-Nirenberg type inequalities in Fourier-Herz spaces.
J. Funct. Anal. 275 (2018),   1138--1172.

\bibitem{[Esfahani]}A. Esfahani, Anisotropic Gagliardo-Nirenberg inequality with fractional derivatives. Z. Angew. Math. Phys.,   66 (2015), 3345--3356.


 \bibitem{[Gagliardo]} E. Gagliardo, Ulteriori proprieta di alcune classi di funzioni in piu variabili (Italian), Ricerche Mat. 8 (1959), 24--51.

\bibitem{[Galdi]} G. P. Galdi, An introduction to the Navier-Stokes initial-boundary value problem, in: Fundamental Directions in Mathematical Fluid Mechanics, in: Adv. Math. Fluid Mech., Birkh\"auser, Basel, 2000,   1--70.
 \bibitem{[GCS]}
Z.    Guo, M.  Caggio and  Z. Skal\'ak,  Regularity criteria for the Navier-Stokes equations based on one component of velocity.
Nonlinear Anal. Real World Appl. 35 (2017), 379--396



\bibitem{[HMOW]}
 H. Hajaiej, L. Molinet, T. Ozawa and B. Wang, Necessary and sufficient conditions for the fractional Gagliardo-Nirenberg inequalities and applications to Navier-Stokes and generalized Boson equations, in: Harmonic Analysis and Nonlinear Partial Differential Equations, in: RIMS Kokyuroku Bessatsu, vol.B26, Res. Inst. Math. Sci. (RIMS), Kyoto, 2011,  159--175.



\bibitem{[Hopf]}
E. Hopf, Uber die Anfangswertaufgabe fur die hydrodynamischen Grundgleichungen, Math. Nachr., (German) 4   (1950), 213--231.

\bibitem{[Hytonen]}
T.  P.  Hyt\"onen,  Estimates for partial derivatives of vector-valued functions. Illinois Journal of Mathematics,  51 (2007), 731--742.

 \bibitem{[Leray1]}J. Leray, Sur le mouvement d\'eun liquide visqueux
emplissant l\'espace, Acta Math.,  63 (1934),  193--248.
\bibitem{[Lions]}
J. L. Lions, Sur la r\'egularit\'e et l'unicit\'e des solutions turbulentes des \'equations de Navier Stokes, Rend. Semin. Mat. Univ. Padova, 30 (1960), 16--23.

\bibitem{[Lizork]}
P.I. Lizorkin, Multipliers of Fourier integrals and bounds of convolution in spaces with mixed norms, Izv. Akad. Nauk SSSR Ser. Mat. (1) 34 (1970), 218--247, English translation in Math. USSR Izv. 4 (1970), 225--255.


\bibitem{[Nirenberg]} L. Nirenberg, On elliptic partial differential equations. Ann. Scuola Norm. Sup. Pisa Cl. Sci. 13 (1955), 116--162.



			\bibitem{[RRT]}
J. L. Rubio de Francia, F. J. Ruiz and J. L. Torrea, Calderon-Zygmund theory
for operator-valued kernels,   Adv. Math. 62  (1986), 7--48.
		
\bibitem{[Sawano]}
Y. Sawano, Theory of Besov spaces. Developments in Mathematics 56. Springer, Singapore, 2018.

	\bibitem{[Shinbrot]}
M. Shinbrot, The energy equation for the Navier-Stokes system, SIAM J. Math. Anal. 5 (1974), 948--954.





\bibitem{[WW]}Y. Wang and G. Wu, Anisotropic regularity conditions for the suitable weak solutions to the 3D Navier-Stokes equations. J. Math. Fluid Mech. 18 (2016),  699--716.

    \bibitem{[WWZ]}
     Y. Wang, G. Wu and D. Zhou, $\varepsilon$-regularity criteria in anisotropic Lebesgue spaces and Leray's self-similar solutions to the 3D Navier-Stokes equations. Z. Angew. Math. Phys. 71 (2020),   Paper No. 164, 20 pp.

     \bibitem{[WY]}
    Y. Wang and Y. Ye, A general sufficient criterion for energy conservation in the Navier-Stokes system. arXiv:2106.01233, 2021.
     \bibitem{[WMH]}
 Y.  Wang,  X. Mei and Y. Huang,   Energy equality of the 3D Navier-Stokes equations and generalized Newtonian equations. To appear in J. Math. Fluid Mech., 2022.
	\bibitem{[WWY]} W. Wei, Y. Wang and  Y. Ye,
Gagliardo-Nirenberg inequalities in Lorentz type spaces and energy equality for the Navier-Stokes system. arXiv:2106.11212.
 \bibitem{[Zhang]}
		Z.	Zhang,  Remarks on the energy equality for the non-Newtonian fluids. J. Math. Anal. Appl. 480 (2019),  123443, 9 pp.
\bibitem{[Zheng]}X.	Zheng, A regularity criterion for the tridimensional Navier-Stokes equations in term of one velocity component. J. Differential Equations, 256 (2014), 283--309.
\end{thebibliography}
\end{document}